\numberwithin{equation}{section}
\newtheorem{theorem}{Theorem}[section]
\newtheorem{proposition}[theorem]{Proposition}
\newtheorem{lemma}[theorem]{Lemma}
\theoremstyle{definition}
\newtheorem{example}[theorem]{Example}
\theoremstyle{remark}
\newtheorem{remark}[theorem]{Remark}
\newtheorem{step}{Step}
\renewcommand{\hom}{\operatorname{Hom}}
\newcommand{\Z}{\mathbb{Z}}
\newcommand{\Q}{\mathbb{Q}}
\newcommand{\R}{\mathbb{R}}
\newcommand{\C}{\mathbb{C}}
\newcommand{\proj}{{\mathbb P}}
\newcommand{\D}{\mathcal{D}}
\newcommand{\G}{\Gamma}
\newcommand{\Res}{\textrm{Res}}
\newcommand{\Kp}{\mathsf{K}_{p,X}^{\bullet}}
\newcommand{\Mp}{\mathsf{M}_{p,{\G}}^{\bullet}}
\newcommand{\CH}{\textrm{CH}}
\newcommand{\OL}{{\rm O}^{+}(L)}
\newcommand{\OLd}{{\rm O}^{+}(L')}
\newcommand{\Otilde}{\tilde{{\rm O}}^{+}}
\newcommand{\Ohat}{\hat{{\rm O}}^{+}}
\newcommand{\GL}{\Gamma_{L}}
\begin{document}

\title[]{Borcherds products approximating Gersten complex}
\author[]{Shouhei Ma}
\thanks{Supported by KAKENHI 21H00971} 
\address{Department~of~Mathematics, Science~Institute~of~Tokyo, Tokyo 152-8551, Japan}
\email{ma@math.titech.ac.jp}
\subjclass[2020]{11F55, 19D45, 14C15}

\begin{abstract}
For an orthogonal modular variety, 
we construct a complex which is defined in terms of lattices and elliptic modular forms, 
which resembles the Gersten complex in Milnor $K$-theory, 
and which has a morphism to the Gersten complex of the modular variety by the Borcherds lifting. 
This provides a formalism for approaching 
the higher Chow groups of the modular variety 
by special cycles and Borcherds products. 
The construction is an incorporation of 
the theory of Borcherds products and ideas from Milnor $K$-theory. 
\end{abstract}

\maketitle

\section{Introduction}\label{sec: intro}


The study of cohomology of modular varieties 
started with the works of Eichler, Shimura and Matsushima who discovered 
how to connect modular and automorphic forms to cohomology. 
Since then, this subject has grown rich and is now related to various branches of Mathematics. 

On the other hand, the development of algebraic geometry led to the discovery of motivic cohomology 
as a sort of generalized cohomology theory for algebraic varieties. 
Geometrically, motivic cohomology is realized as \textit{higher Chow groups} (\cite{Bl}). 
These groups are in general mysterious, 
but expected to encode deep arithmetic and geometric significance. 

If one believes in the unity of Mathematics, 
it is then natural to expect that higher Chow groups of modular varieties would be rich objects. 
After the influential work of Beilinson \cite{Be} on modular curves, 
the study in this direction has centered around low-dimensional cases, 
such as Hilbert modular surfaces (e.g., \cite{Ra1}, \cite{Ra2}, \cite{Ra3}). 
It seems rather recent that the higher-dimensional case is starting to be investigated. 


The purpose of this paper is to construct a path from modular forms to algebraic $K$-theory 
for orthogonal modular varieties by using the theory of Borcherds products \cite{Bo95}, \cite{Bo98}.  
To set up the notations, let $L_0$ be an even lattice of signature $(2, n)$ with $n>0$ 
and $X={\D}/{\G}$ be a modular variety associated to $L_0$, 
where ${\D}$ is the Hermitian symmetric domain 
and ${\G}$ is a suitable subgroup of the orthogonal group of $L_0$. 
The \textit{Borcherds lift} is a multiplicative lifting $f\mapsto \psi_{L_0}(f)$ 
from certain (weakly holomorphic, vector-valued) modular forms $f$ of one variable 
to meromorphic modular forms $\psi_{L_0}(f)$ on $X$ with some remarkable properties. 
(We use the arithmetic normalization of \cite{HMP}.) 
The same construction also applies to sublattices $L\subset L_0$ of signature $(2, \ast)$ 
and produces Borcherds products on 
the \textit{special cycle} $Z_L\subset X$, i.e., the sub modular variety defined by $L$. 

Among the properties of Borcherds products, 
the most relevant to us is that its divisor is an explicit combination of special cycles of codimension $1$. 
This property naturally leads to the idea to use Borcherds products for constructing higher Chow cycles of small degree. 
Although this idea in its primitive form is more or less classical, 
it seems to have been taken up only sporadically (\cite{Ga}, \cite{Sr2}). 
We wish to develop this idea in a more coherent and comprehensive way.

\subsection*{The dictionary}

Recall that a fundamental role in the study of higher Chow groups 
is played by the \textit{Gersten complex} (in Milnor $K$-theory) \cite{Ka}: 
\begin{equation*}\label{eqn: Gersten intro}
{\Kp} \; \: : \; \; \; 
\cdots \longrightarrow \bigoplus_{Z\in X^{(r)}} K_{p-r}^{M}{\C}(Z) 
\longrightarrow \bigoplus_{Z'\in X^{(r+1)}} K_{p-r-1}^{M}{\C}(Z') \longrightarrow \cdots. 
\end{equation*}
Here $X^{(r)}$ is the set of irreducible subvarieties of $X$ of codimension $r$,  
$K_{q}^{M}F=K_{q}^{M}F\otimes_{{\Z}}{\Q}$ is the rational Milnor $K$-group of a field $F$, 
and the boundary maps 
are defined by using the tame symbols. 
The cohomology of ${\Kp}$ is related to the rational higher Chow groups ${\CH}^{p}(X, m)_{{\Q}}$ of $X$ 
via the coniveau spectral sequence. 
In particular, 
$H^{p-m}{\Kp}\simeq {\CH}^{p}(X, m)_{{\Q}}$ 
when $m\leq 2$. 

Now the recursive character of special cycles and Borcherds products 
reminds us of certain essence of Gersten complex. 
We can now explain our goal more specifically: 
to construct a complex which is defined in terms of lattices and elliptic modular forms, 
and which has a morphism to the Gersten complex by the Borcherds lifting. 
The whole idea can be roughly expressed in the following dictionary: 

\begin{center}
\begin{tabular}{|c|c|c|c|c|} 
\hline  
$K$-theory   & $X^{(r)}$  & ${\C}(Z)^{\times}$ & $K_{q}^{M}{\C}(Z)$   & tame symbol  \\ \hline  
MF & $L_{0}^{(r)}$ & $M(L)$ & $\wedge^{q}M(L)$ & quasi-pullback \\ \hline 
\end{tabular}
\end{center}

\noindent
Here $L_{0}^{(r)}$ is the set of ${\G}$-equivalence classes of 
primitive sublattices $L$ of $L_0$ of corank $r$ 
(which correspond to special cycles $Z_L$ of codimension $r$), 
and $M(L)$ is the space of input modular forms for Borcherds products of weight $0$ for $L$, tensored with ${\Q}$. 
Thus $M(L)$ is a countably infinite-dimensional ${\Q}$-linear space of elliptic modular forms. 

A key point in our construction is to define the residue map 
\begin{equation}\label{eqn: residue intro}
\wedge^{q}M(L) \to \wedge^{q-1}M(L') 
\end{equation}
for $L'\subset L$ by using combination of \textit{quasi-pullback}. 
This is a renormalized restriction operator for Borcherds products (\cite{Bo95}, \cite{BKPSB}); 
on the level of input forms, it is given by a linear map, 
essentially multiplication by the theta series of 
$(L')^{\perp}\cap L$ (\cite{Ma}, \cite{Ze}). 
In this way, 
by translating ideas from Milnor $K$-theory, 
we can form the sequence 
\begin{equation*}\label{eqn: Gersten intro}
{\Mp} \: \; : \; \; \; 
\cdots \longrightarrow \bigoplus_{L\in L_{0}^{(r)}} \wedge^{p-r}M(L)^{G_L} 
\stackrel{\partial}{\longrightarrow} \bigoplus_{L'\in L_{0}^{(r+1)}} \wedge^{p-r-1}M(L')^{G_{L'}} \longrightarrow \cdots 
\end{equation*}
whose terms are defined in terms of lattices and elliptic modular forms, 
and whose boundary maps come from multiplication by theta series. 
(In practice, we need to take the invariant part for a certain finite group $G_L$, but this is a technical matter.) 


Our main result can be summarized as follows. 

\begin{theorem}[Theorem \ref{thm: main}]\label{thm: main intro}
Let $p\leq n$. 
The sequence ${\Mp}$ is a complex, i.e., $\partial \circ \partial =0$. 
The Borcherds lift maps 
\begin{equation}\label{eqn: Borcherds to Milnor}
\wedge^{q}M(L)^{G_L} \to K^{M}_{q}{\C}(Z_L), \quad 
f_1\wedge \cdots \wedge f_q \mapsto 
\{ \psi_L(f_1), \cdots, \psi_L(f_q) \} 
\end{equation}
for sublattices $L$ of $L_0$ define a morphism 
\begin{equation*}
{\Mp} \to {\Kp}
\end{equation*}
of complexes. 
\end{theorem}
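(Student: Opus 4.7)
The plan is to establish the two parts of the theorem sequentially: first verify that $\partial \circ \partial = 0$ on $\Mp$, and then check that the Borcherds lift intertwines $\partial$ with the Gersten differential. The unifying idea is that the quasi-pullback formula for Borcherds products is the exact analytic counterpart of the tame-symbol formula from Milnor $K$-theory, so the two verifications should run in parallel.

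For $\partial \circ \partial = 0$, I would first pin down the sign convention in the Koszul-type definition of $\partial$ on $\wedge^{q}M(L)$, patterned on the classical tame-symbol formula. Given $L \in L_{0}^{(r)}$ and $L'' \in L_{0}^{(r+2)}$ with $L'' \subset L$ primitive, the composite $\partial \circ \partial$ restricted to this pair expands as a sum over primitive intermediate sublattices $L'$ of corank one in $L$ with $L'' \subset L'$. The heart of the argument is that this set has a natural two-element structure coming from the rank-two quotient $L/L''$: each such $L'$ corresponds to a primitive direction of $L/L''$. On the input-form side, passing through $L'_{1}$ multiplies by $\vartheta_{K_{1}}$ with $K_{1} = (L'_{1})^{\perp}\cap L$, and passing through $L'_{2}$ gives $\vartheta_{K_{2}}$; both factor through the common theta-series datum of $K = (L'')^{\perp}\cap L$. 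Using this factorization together with the anti-symmetry of the wedge product, I would show that the two contributions cancel in a Koszul-style pattern, which is the analogue of the classical proof that the tame symbol of a tame symbol vanishes.

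For the morphism-of-complexes assertion, the two essential ingredients are the explicit divisor formula of the Borcherds product, which computes the order of $\psi_{L}(f)$ along $Z_{L'}$ from the principal part of $f$, and the quasi-pullback formula, which identifies the suitably normalized restriction $\psi_{L}(f)|_{Z_{L'}}$ with $\psi_{L'}(\vartheta_{(L')^{\perp}\cap L}\cdot f)$. Given $f_{1}\wedge\cdots\wedge f_{q} \in \wedge^{q}M(L)^{G_{L}}$, the tame symbol
\[
\partial_{Z_{L'}}\{\psi_{L}(f_{1}),\ldots,\psi_{L}(f_{q})\}
\]
decomposes into a valuation part (the order of vanishing of each $\psi_{L}(f_{i})$ along $Z_{L'}$, read off from Fourier coefficients) and a restriction part (the pullback of the remaining Borcherds products to $Z_{L'}$). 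Inserting the quasi-pullback formula into the restriction part should yield exactly the Borcherds lift of the image of $f_{1}\wedge\cdots\wedge f_{q}$ under the residue map defined in \eqref{eqn: residue intro}, which is what one wants; the invariance under $G_{L}$ passes cleanly to $G_{L'}$-invariance on the target by functoriality.

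The hard part will be the $\partial^{2}=0$ step, and more specifically the combinatorial bookkeeping: tracking $\G$-equivalence classes of primitive sublattices (so that contributions over individual $L'$ assemble into contributions over orbits), maintaining $G_{L'}$-invariance after applying $\partial$, and pinning down exact signs in the Koszul expansion so that the two paths from $L$ to $L''$ contribute with opposite signs. A secondary subtlety will be verifying that the quasi-pullback identity holds on the nose, not merely up to a nonvanishing constant, in the arithmetic normalization of \cite{HMP}, since any multiplicative discrepancy would obstruct literal commutativity of the diagram defining the morphism of complexes.
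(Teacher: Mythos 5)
Your treatment of the second assertion (the morphism of complexes) is essentially the paper's: decompose the tame symbol into a valuation part read off from Fourier coefficients and a restriction part identified with the quasi-pullback, and handle the normalization constant via the arithmetic normalization (the paper settles the ``on the nose up to $\pm1$'' issue in Lemma \ref{lem: pullback weight 0} using the regularized theta lift and Zemel's restriction formula, and the $\pm 1$ dies after tensoring with $\Q$). The transfer/normalization bookkeeping you defer is exactly Propositions \ref{prop: residue and tame} and \ref{prop: residue and transfer} together with Lemma \ref{lem: push}; that part of your plan is sound.

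The genuine gap is in your argument for $\partial\circ\partial=0$. The set of intermediate sublattices $L'$ with $L''\subset L'\subset L$ does \emph{not} carry a ``natural two-element structure'': such $L'$ correspond to primitive isotropic-free directions in the rank-two negative definite lattice $(L'')^{\perp}\cap L$ (equivalently, to the special divisors of $Z_L$ through $Z_{L''}$), and there are in general many of them, finitely many of which contribute for a fixed input. Consequently the vanishing is not a Koszul-style sign cancellation between two paths; already for $q=2$ the identity to be proved is
\begin{equation*}
\sum_{L'}\bigl(\nu_{L'}(f_1)\,\nu_{L''}(f_2|_{L'})-\nu_{L'}(f_2)\,\nu_{L''}(f_1|_{L'})\bigr)=0
\end{equation*}
(up to transfer indices), which is a reciprocity statement, the modular avatar of Weil reciprocity, and no formal antisymmetry argument will produce it. This is why the paper proves the two assertions in the \emph{opposite} order: it first establishes the morphism ${\Mp}\to{\Kp}$, then for $q=2$ observes that the final vertical map $\Q[L'']\to\Q[Z_{L''}]$ is an isomorphism, so that $\partial\circ\partial=0$ on $\wedge^{2}M(L)^{G_L}$ is \emph{inherited} from Kato's theorem that ${\Kp}$ is a complex. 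General $q$ is then reduced to $q=2$ by the explicit expansion
\begin{equation*}
\partial\partial(f_1\wedge\cdots\wedge f_q)=\sum_{i>j}(-1)^{i+j}\,\partial\partial(f_i\wedge f_j)\cdot\bigl(f_1|_{L''}\wedge\cdots(\textrm{no } i,j)\cdots\wedge f_q|_{L''}\bigr),
\end{equation*}
using transitivity of quasi-pullback (the part of your plan that is correct), followed by two further reduction steps handling general $\G$ and general ambient $L$ via the transfer compatibilities. You should either adopt this reduction to Kato's theorem or supply an independent proof of the displayed reciprocity for Borcherds products; as written, your Step for $\partial^{2}=0$ would fail.
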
 

In particular, we obtain natural maps 
\begin{equation}\label{eqn: Borcherds to Chow intro}
H^{p-m}{\Mp} \to {\CH}^{p}(X, m)_{{\Q}} 
\end{equation}
for $0\leq m \leq 2$. 
(The case $m=0$ is just the usual map sending special cycles to the Chow group.) 
When $m=1$, it will be more natural to pass to the indecomposable part of ${\CH}^{p}(X, 1)_{{\Q}}$: 
\begin{equation}\label{eqn: ind part intro}
H^{p-1}{\Mp} \to {\CH}^{p}(X, 1)_{{\rm ind},{\Q}}, 
\end{equation}
as the image of 
$\mathsf{M}_{p,{\G}}^{p-1} \to \mathsf{K}_{p,X}^{p-1}$ 
usually does not contain apparent decomposable cycles. 
%
When $m\geq 3$, $H^{p-m}{\Mp}$ gets related to ${\CH}^{p}(X, m)_{{\Q}}$ more indirectly by 
\begin{equation*}
H^{p-m}{\Mp} \to H^{p-m}{\Kp} \leftarrow {\CH}^{p}(X, m)_{{\Q}}, 
\end{equation*}
where the right map comes from the coniveau spectral sequence. 

In this way, Theorem \ref{thm: main intro} provides a formalism for 
approaching the higher Chow groups of $X$ via lattices and elliptic modular forms. 
In particular, in the case of $(2, 2)$-cycles with $n\geq 2$, 
the construction of $\mathsf{M}_{2,{\G}}^{\bullet}$ provides an answer to a question of Sreekantan (\cite{Sr2} \S 6.0.2).  
Although the complex ${\Mp}$ is still large, 
it would be at least more manageable than the original Gersten complex ${\Kp}$. 
It is desirable to develop techniques of computation with ${\Mp}$.

\subsection*{Related works}

In his pioneering paper (\cite{Ra3} \S 7.14), 
Ramakrishnan proposed a "modular complex": 
a subsequence of the Gersten complex where only modular subvarieties are considered. 
However, his sequence as it stands would not be a complex; 
the usual proof using the Weil reciprocity (\cite{Ka}, \cite{GS}) 
breaks down if one excludes general subvarieties while keeping the $K$-groups the same. 
From this point of view, the complex ${\Mp}$ can be regarded as a ``more modular'' 
modification of Ramakrishnan's sequence where 
the $K$-groups and boundary maps are also replaced by modular stuff.  
Thus the property of ${\Mp}$ being a complex would be indeed nontrivial. 

Our substitution of $\wedge^{q}$ in place of $K_{q}^{M}$ 
can be considered as an approximation of $K$-theory 
which compensates for the lack of summation of Borcherds products. 
The corresponding sequence   
\begin{equation*}
\cdots \longrightarrow \bigoplus_{Z\in X^{(r)}} \wedge^{p-r}{\C}(Z)^{\times}_{\Q}  
\longrightarrow \bigoplus_{Z'\in X^{(r+1)}} \wedge^{p-r-1}{\C}(Z')^{\times}_{\Q} \longrightarrow \cdots  
\end{equation*}
shows up in the polylogarithmic motivic complex of Goncharov \cite{Go} (which is defined when $p\leq 3$). 
Hence it would be also interesting to compare ${\Mp}$ with the Goncharov complex. 

\subsection*{Further questions}

Two fundamental but difficult questions that arise from Theorem \ref{thm: main intro} are the following: 
\begin{enumerate}
\item To examine whether the kernel and cokernel of the map \eqref{eqn: Borcherds to Chow intro} or \eqref{eqn: ind part intro} 
are finite-dimensional, especially in the case $p=2$. 
\item To examine whether the cohomology of ${\Mp}$ is finite-dimensional 
(as was asked by Ramakrishnan \cite{Ra3} for his sequence). 
\end{enumerate}
The question (1) is motivated by 
the theorem of Bruinier \cite{Br} and Bergeron-Li-Millson-Moeglin \cite{BLMM} 
that the map \eqref{eqn: Borcherds to Chow intro} is typically an isomorphism 
in the case of divisors. 
A natural approach toward understanding the image is to compose these maps with the regulator map. 
Garcia \cite{Ga} studied the naive regulators of $(2, 1)$-cycles of this type from the viewpoint of theta lifting. 
As for the question (2), 
a result of Bruinier-Raum-Zhang (\cite{BR}, \cite{Zhang}) says that 
the image of $H^p{\Mp}\to {\rm CH}^{p}(X)_{{\Q}}$ has finite dimension for any $p$. 
Thus, for $m=0$, a version of Ramakrishnan's question indeed has an affirmative answer. 
The case $m>0$ would require a new idea. 

Finally, we should mention two potential directions of application: 
\begin{enumerate}
\item To approach the Beilinson conjecture for orthogonal modular varieties 
by producing enough special higher cycles via modular form computation. 
\item To set up a stage for investigating generating series with coefficients in special higher cycles, 
namely a higher Chow version of Kudla's program.  
\end{enumerate}
It would be interesting if special higher cycles could provide a forum where 
the Beilinson conjecture and the Kudla program interact. 

\subsubsection*{Organization}

This paper is organized as follows. 
\S \ref{sec: Borcherds} is a recollection of Borcherds products. 
\S \ref{sec: Gersten} is a recollection of Milnor $K$-theory. 
In \S \ref{sec: Borcherds to Milnor}, we define the Borcherds lift map \eqref{eqn: Borcherds to Milnor}. 
In \S \ref{sec: residue}, we define the residue map \eqref{eqn: residue intro}. 
In \S \ref{sec: BG cpx}, we define the sequence ${\Mp}$. 
In \S \ref{ssec: complex}, we prove Theorem \ref{thm: main intro}. 
\S \ref{sec: regulator} is a collection of remarks on the regulators of special higher cycles.  

\subsubsection*{Acknowledgement}

It is my pleasure to thank Shuji Saito for instruction in $K$-theory.


\section{Borcherds products}\label{sec: Borcherds}

In this section we recall a basic theory of orthogonal modular varieties and Borcherds products 
following \cite{Bo98} and \cite{Br}.

\subsection{Orthogonal modular varieties}\label{ssec: modular variety}

Let $L$ be an even lattice of signature $(2, n)$ with $n>0$. 
By this we mean a free ${\Z}$-module of rank $n+2$ equipped with 
a nondegenerate symmetric form $( \cdot , \cdot): L\times L \to {\Z}$ of signature $(2, n)$ 
such that $(l, l)\in 2{\Z}$ for every $l\in L$. 
The orthogonal group of $L$ is denoted by ${\rm O}(L)$. 
The space 
\begin{equation*}
\{ \: {\C}\omega \in {\proj}(L\otimes {\C}) \: | \: (\omega, \omega)=0, \; (\omega, \bar{\omega})>0 \: \} 
\end{equation*}
consists of two connected components. 
We choose one of them and denote it by ${\D}={\D}_{L}$. 
This is the Hermitian symmetric domain associated to $L$. 
We denote by ${\OL}$ the subgroup of ${\rm O}(L)$ preserving ${\D}$. 
If ${\G}$ is a subgroup of ${\OL}$ of finite index, 
the quotient 
\begin{equation*}
X_{{\G}} = {\D}/{\G} 
\end{equation*}
has the structure of an irreducible normal quasi-projective variety of dimension $n$. 
This is called an \textit{orthogonal modular variety}. 

Let $\mathcal{O}(-1)=\mathcal{O}_{{\proj}L_{{\C}}}(-1)|_{{\D}}$ 
be the tautological line bundle on ${\D}$. 
Let $\chi$ be a character of ${\G}$ of finite order. 
A ${\G}$-invariant meromorphic section of $\mathcal{O}(-k)\otimes \chi$ over ${\D}$ 
is called a \textit{meromorphic modular form} of weight $k$ and character $\chi$ with respect to ${\G}$. 
When $n\leq 2$ and $L$ has Witt index $n$, the cusp condition is imposed as usual; 
in other cases this is automatically satisfied by the Koecher principle. 
In the case $k=0$ and $\chi=1$, 
modular functions descend to rational functions on the modular variety $X_{{\G}}$. 
We will be mainly concerned with this case in this paper.

\subsection{Special cycles}\label{ssec: special cycle}

For compatibility with \S \ref{sec: BG cpx}, 
we reset our notation in this subsection: 
we take an even lattice $L_0$ of signature $(2, n)$ with $n>0$, 
a finite-index subgroup ${\G}$ of ${\rm O}^{+}(L_{0})$, 
and write $X=X_{{\G}}={\D}_{L_0}/{\G}$. 

We call a sublattice $L$ of $L_{0}$ a \textit{sublattice of corank} $r$ 
if it is primitive and has signature $(2, n-r)$. 
For such $L$, we denote by ${\rm Stab}_{{\G}}(L)$ the stabilizer of $L$ in ${\G}$. 
The natural map ${\rm Stab}_{{\G}}(L)\to {\OL}$ has finite kernel and cokernel. 
We write ${\GL}<{\OL}$ for its image. 

Let $Z_{L}\subset X$ be the image of ${\D}_{L}\subset {\D}_{L_{0}}$ by the projection ${\D}_{L_0}\to X$. 
This is an irreducible subvariety of $X$. 
We call $Z_{L}$ the \textit{special cycle} associated to $L$. 
We have a natural birational morphism 
\begin{equation*}
\pi : X_{{\GL}}\to Z_{L} \subset X 
\end{equation*}
from the modular variety defined by ${\GL}$. 
This gives the normalization of $Z_{L}$. 
The locus where $\pi$ is not isomorphic is caused by those 
$\gamma\in {\G}- {\rm Stab}_{{\G}}(L)$  
such that $\gamma {\D}_{L}\cap {\D}_{L}\ne \emptyset$, 
or equivalently, 
$L':=\gamma L \cap L$ has signature $(2, \ast)$. 
Thus the non-normal locus of $Z_{L}$ is the union of such $Z_{L'}\subset Z_{L}$.

\subsection{Borcherds products}\label{ssec: Borcherds}

Again let $L$ be an even lattice of signature $(2, n)$ with $n>0$. 
We embed $L$ in the dual lattice $L^{\vee}={\hom}_{{\Z}}(L, {\Z})$ in the natural way. 
The quotient $A_{L}=L^{\vee}/L$ is called the \textit{discriminant group} of $L$. 
It inherits a natural quadratic form $A_L\to {\Q}/2{\Z}$. 
The kernel of the reduction map ${\OL}\to {\rm O}(A_{L})$ is denote by ${\Otilde}(L)$. 
We will use the group 
\begin{equation*}
{\Ohat}(L) := \langle {\Otilde}(L), -{\rm id}_{L} \rangle 
\end{equation*}
frequently, and reserve the notation 
\begin{equation}\label{eqn: tildeXL}
\tilde{X}_{L} := {\D}_{L}/ {\Ohat}(L) = {\D}_{L}/{\Otilde}(L) 
\end{equation}
for the modular variety defined by this group. 

Let ${\C}A_L$ be the group ring of $A_{L}$, and 
${\rm Mp}_{2}({\Z})$ be the metaplectic double cover of ${\rm SL}_{2}({\Z})$. 
There is a certain representation $\rho_{L}$ of ${\rm Mp}_{2}({\Z})$ on ${\C}A_{L}$, 
called the \textit{Weil representation}. 
We refer to \cite{Bo98}, \cite{Br} for the precise definition of $\rho_{L}$ 
(which we will not use explicitly in this paper). 
A meromorphic modular form $f(\tau)$ of weight $1-n/2$ for ${\rm Mp}_{2}({\Z})$ with values in $\rho_{L}$ 
is called \textit{weakly holomorphic} if it is holomorphic on the upper half plane 
(but allowed to be meromorphic at the cusp $i\infty$). 
It has a Fourier expansion of the form 
\begin{equation*}
f(\tau) = 
\sum_{\lambda\in {A_L}}\sum_{n\in (\lambda, \lambda)/2+{\Z}} 
c(\lambda, n)q^n \mathbf{e}_{\lambda}, 
\end{equation*}
where $q^n=\exp(2\pi i n\tau)$ for $n\in {\Q}$ and 
$\mathbf{e}_{\lambda}$ is the basis vector of ${\C}A_L$ corresponding to $\lambda\in A_{L}$. 
The invariance under the center of ${\rm Mp}_{2}({\Z})$ implies that 
$f$ actually takes values in the invariant part of ${\C}A_L$ for $-{\rm id}_{L}$. 
The polynomial 
\begin{equation*}
\sum_{\lambda}\sum_{n<0} c(\lambda, n)q^n \mathbf{e}_{\lambda} 
\end{equation*}
is called the \textit{principal part} of $f$. 
We say that $f$ has rational (resp.~ integral) principal part 
if the coefficients $c(\lambda, n)$ of its principal part are contained in ${\Q}$ (resp.~ ${\Z}$). 
When $n\geq 3$, rationality of the principal part implies $c(0, 0)\in {\Q}$ (\cite{Br} Remark 3.23). 
In general, we denote by $M(L)_{\ast}$ the space of weakly holomorphic modular forms 
of weight $1-n/2$ and type $\rho_{L}$ for ${\rm Mp}_{2}({\Z})$ with rational principal part and with $c(0, 0)\in {\Q}$. 

Let $f=\sum_{\lambda,n}c(\lambda, n)q^n \mathbf{e}_{\lambda}$ 
be a modular form in $M(L)_{\ast}$ with integral principal part and with $c(0, 0)\in 2{\Z}$. 
From such $f$, Borcherds \cite{Bo98} constructed a meromorphic modular form $\psi(f)$ on ${\D}_{L}$ 
with some remarkable properties. 
In particular, $\psi(f)$ is modular for ${\Ohat}(L)$, 
of weight $c(0, 0)/2$ with a character of finite order, 
and its divisor is an explicit combination of special divisors. 
More specifically, for a vector $l\in L$ with $(l, l)<0$, 
the order of $\psi(f)$ along ${\D}_{l^{\perp}}\subset {\D}_{L}$ is given by 
\begin{equation}\label{eqn: nu(l)}
\tilde{\nu}_{l}(f) := 
\sum_{\begin{subarray}{c} \alpha>0 \\ \alpha l\in L^{\vee}\end{subarray}} 
c(\alpha l, \alpha^{2}(l, l)/2). 
\end{equation}
The modular form $\psi(f)$ is called the \textit{Borcherds product} associated to $f$. 

We will be mainly concerned with the case $c(0, 0)=0$. 
In this case, after multiplying $f$ by a suitable natural number, 
$\psi(f)$ is modular of weight $0$ with trivial character, 
and hence descends to a rational function on $\tilde{X}_{L}$. 
We denote by 
\begin{equation}\label{eqn: ML0}
M(L) \subset M(L)_{\ast} 
\end{equation}
the linear subspace defined by the condition $c(0, 0)=0$. 

In fact, in the original definition of $\psi(f)$ (\cite{Bo98} \S 13), 
there was an ambiguity of multiplication by a constant of absolute value $1$. 
In order to get rid of this, we use the arithmetic normalization of Howard and Madapusi Pera (\cite{HMP} \S 1.2 and \S 9). 
In the case $c(0, 0)=0$, 
their result says that after multiplying $f$ by a suitable natural number, 
we can choose the constant so that 
$\psi(f)$ is defined over ${\Q}$ with respect to the underlying Shimura variety. 
(When $n\geq 6$ and $L$ contains the integral hyperbolic plane, 
this can be understood as rationality of the Fourier coefficients: see \cite{HMP} \S 9.1.) 
This rationality determines $\psi(f)$ uniquely up to $\pm 1$, 
rational numbers of absolute value $1$. 
In what follows, we shall use the notation $\psi(f)$ for this arithmetically normalized Borcherds products. 
In this notation, $\psi(f)$ is uniquely determined up to $\pm 1$ and satisfies 
\begin{equation}\label{eqn: multiplicative}
\psi(f+g) = \pm \, \psi(f) \cdot \psi(g), 
\end{equation}
\begin{equation}\label{eqn: equivariant}
\psi(\gamma f) = \pm \, \gamma \psi(f), \quad \gamma \in {\rm O}^{+}(L).  
\end{equation}
Here ${\rm O}^{+}(L)$ acts on $f$ by its natural action on $A_{L}$. 

Of course we should not forget that 
the given form $f$ is multiplied by some natural number 
and that the ambiguity of $\pm 1$ remains. 
But these issues can be ignored if we take the tensor product with ${\Q}$. 
We will resume this discussion in \S \ref{sec: Borcherds to Milnor}.

\subsection{Quasi-pullback}\label{ssec: quasi-pullback}

Let $\psi_L(f)$ be a Borcherds product on ${\D}_{L}$. 
Let $L'$ be a sublattice of $L$ of corank $r<n$. 
We write $K=(L')^{\perp}\cap L$. 
Borcherds (\cite{Bo95}, \cite{BKPSB}) found an operation of renormalized restriction 
of $\psi_{L}(f)$ to ${\D}_{L'}\subset {\D}_{L}$, 
called \textit{quasi-pullback}. 
This is defined as 
\begin{equation*}
\psi_{L}(f) ||_{{\D}_{L'}} := \left. \frac{\psi_{L}(f)}{\prod_{\pm l}(\cdot, l)^{\tilde{\nu}_l}} \right|_{{\D}_{L'}} 
\end{equation*}
Here $\pm l$ ranges over all primitive vectors of $K$ up to $\pm 1$, 
$(\cdot, l)$ is the linear form defined by the pairing with $l$, 
and $\tilde{\nu}_{l}=\tilde{\nu}_{l}(f)$ is the order of $\psi_{L}(f)$ along ${\D}_{l^{\perp}}$ given by \eqref{eqn: nu(l)}. 
When ${\D}_{L'}$ is not contained in the divisor of $\psi_{L}(f)$, this is the ordinary restriction. 

It is proved in \cite{Ma}, \cite{Ze} that 
$\psi_{L}(f) ||_{{\D}_{L'}}$ is a Borcherds product on ${\D}_{L'}$ up to constant, 
with the input modular form given by 
\begin{equation}\label{eqn: |_L'}
f|_{L'} := \langle f\! \uparrow_{L}^{L'\oplus K}, \, \Theta_{K(-1)} \rangle. 
\end{equation}
Here 
$\uparrow_{L}^{L'\oplus K}\colon {\C}A_L\to {\C}A_{L'}\otimes {\C}A_{K}$ 
is a pullback map and 
$\langle \cdot , \, \Theta_{K(-1)} \rangle$ is the contraction with the theta series of $K(-1)$. 
In this paper we will not quite need the precise form of $f|_{L'}$, 
for which we refer to \cite{Ma}. 
Instead what matters for us is that 
the quasi-pullback operation is induced by a \textit{linear} map 
\begin{equation*}
|_{L'} : M(L)_{\ast} \to M(L')_{\ast} 
\end{equation*}
between the spaces of input modular forms. 

\begin{lemma}\label{lem: pullback weight 0}
Let $f\in M(L)$. 
If ${\D}_{L'}$ is not contained in ${\rm div}(\psi_{L}(f))$, 
then 
$\psi_{L}(f)|_{{\D}_{L'}} = \pm \psi_{L'}(f|_{L'})$. 
\end{lemma}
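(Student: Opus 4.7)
The plan is to reduce the lemma to the identity $\psi_L(f)\,||_{\D_{L'}} = c \cdot \psi_{L'}(f|_{L'})$ from \cite{Ma}, \cite{Ze} quoted at the end of \S\ref{ssec: quasi-pullback}, and then to verify two points: in our weight-zero setting the left side is the ordinary restriction, and the constant $c$ collapses to $\pm 1$.

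First I would show that the quasi-pullback of $\psi_L(f)$ to $\D_{L'}$ coincides with the ordinary restriction. Every primitive $l\in K=(L')^{\perp}\cap L$ satisfies $\D_{L'}\subset \D_{l^{\perp}}$, so if some $\tilde\nu_l(f)$ were positive then $\D_{L'}$ would lie in ${\rm div}(\psi_L(f))$, contradicting the hypothesis. Thus $\tilde\nu_l(f)=0$ for every such $l$, the denominator in the definition of quasi-pullback is trivial, and $\psi_L(f)\,||_{\D_{L'}}=\psi_L(f)|_{\D_{L'}}$.

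Next I would verify that $f|_{L'}\in M(L')$, i.e., its constant Fourier coefficient at $0$ vanishes, so that $\psi_{L'}(f|_{L'})$ is a weight-zero Borcherds product in our convention. Since the quasi-pullback is a Borcherds product on $\D_{L'}$ with input $f|_{L'}$, its weight equals $c(0,0)(f|_{L'})/2$. On the other hand, the weight of the quasi-pullback equals the weight of $\psi_L(f)$ minus the degree of the (trivial) denominator, and hence equals $0$ because $f\in M(L)$. Comparison gives $c(0,0)(f|_{L'})=0$, so $f|_{L'}\in M(L')$.

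Finally I would pin down the constant in $\psi_L(f)|_{\D_{L'}}=c\cdot\psi_{L'}(f|_{L'})$. Both sides are weight-zero meromorphic modular forms on $\D_{L'}$ for $\Ohat(L')$, and hence descend to rational functions on $\tilde X_{L'}$. Each is defined over $\Q$ on the underlying Shimura variety: the left side because $\psi_L(f)$ is Howard--Madapusi Pera normalized and restriction to a closed Shimura subvariety preserves the field of definition, and the right side by the same normalization applied to $f|_{L'}$. Matching these two arithmetic normalizations then forces $c=\pm 1$. I expect this last step to be the main obstacle, since the rationality of $c$ is easy but the absolute-value-one refinement requires a careful bookkeeping of the Howard--Madapusi Pera normalization, essentially comparing leading Fourier coefficients of the two normalized Borcherds products on the subdomain; this is exactly where the $\pm 1$ ambiguity built into the convention of \S\ref{ssec: Borcherds} comes from.
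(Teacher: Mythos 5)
Your reduction to the identity $\psi_{L}(f)\,||_{{\D}_{L'}} = C\cdot \psi_{L'}(f|_{L'})$, the observation that the hypothesis forces all $\tilde{\nu}_l(f)$ to vanish so that the quasi-pullback is the ordinary restriction, the weight comparison showing $f|_{L'}\in M(L)$... all of this matches the paper's setup, as does the deduction $C\in{\Q}$ from rationality of both sides. But the step you yourself flag as ``the main obstacle'' --- proving $|C|=1$ --- is left as a vague plan, and the plan you sketch is not the one that works. Matching the two arithmetic normalizations cannot by itself force $|C|=1$: being defined over ${\Q}$ only pins a function down up to ${\Q}^{\times}$-scaling, which is exactly why the conclusion of that comparison is $C\in{\Q}$ and nothing more. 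Likewise, ``comparing leading Fourier coefficients on the subdomain'' is problematic in this generality (there may be no cusp when the lattices are anisotropic, and the Fourier expansion of a restriction is not simply the restriction of a Fourier expansion).

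The paper closes this gap analytically, via the regularized theta lift. Because $c(0,0)=0$, Borcherds' Theorem 13.3 gives $-4\log|\psi_{L}(f)| = \Phi_{L}(f)$, and the same identity holds for $\psi_{L'}(f|_{L'})$ since it again has weight $0$. Zemel's seesaw-type result then says $\Phi_{L}(f)|_{{\D}_{L'}} = \Phi_{L'}(f|_{L'})$, whence $|\psi_{L}(f)|_{{\D}_{L'}}| = |\psi_{L'}(f|_{L'})|$ pointwise and $|C|=1$. Combined with $C\in{\Q}$ this yields $C=\pm1$. You should either supply this theta-lift argument or an equally concrete substitute for the absolute-value step; as written, the proposal does not prove the lemma.
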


\begin{proof}
We already know that 
$\psi_{L}(f)|_{{\D}_{L'}} = C \cdot \psi_{L'}(f|_{L'})$ 
for some constant $C$. 
What has to be checked is $C=\pm 1$. 
Since both $\psi_{L}(f)|_{{\D}_{L'}}$ and $\psi_{L'}(f|_{L'})$ are defined over ${\Q}$, 
we have $C\in {\Q}$. 
It remains to verify $|C|=1$. 

Let $\Phi_{L}(f)$ be the regularized theta lift of $f$ as in \cite{Bo98} \S 13. 
By the assumption $c(0, 0)=0$, the Borcherds product $\psi_{L}(f)$ satisfies 
\begin{equation}\label{eqn: regularized theta lift}
-4\log |\psi_{L}(f)| = \Phi_{L}(f) 
\end{equation}
(see \cite{Bo98} Theorem 13.3). 
Since $\psi_{L}(f)||_{{\D}_{L'}}$ is the ordinary restriction by our assumption ${\D}_{L'}\not\subset {\rm div}(\psi_{L}(f))$, 
$\psi_{L'}(f|_{L'})$ has weight $0$ again. 
Hence we have 
\begin{equation*}
-4\log |\psi_{L'}(f|_{L'})| = \Phi_{L'}(f|_{L'}) 
\end{equation*}
similarly. 
On the other hand, a result of Zemel \cite{Ze} says that 
$\Phi_{L}(f)|_{{\D}_{L'}}= \Phi_{L'}(f|_{L'})$. 
Comparing these equalities,  
we find that 
$|\psi_{L}(f)|_{{\D}_{L'}}|= |\psi_{L'}(f|_{L'})|$. 
Hence $|C|=1$. 
\end{proof}

\section{Milnor $K$-theory}\label{sec: Gersten}

In this section we recall Gersten complexes in Milnor $K$-theory. 
Our main references are \cite{SS} Chapters 7--8 and \cite{GS} Chapters 7--8. 

\subsection{Milnor $K$-groups}\label{ssec: Milnor K}

Let $F$ be a field of characteristic $0$ and let $q>0$ be a positive integer. 
The $q$-th Milnor $K$-group $K_{q}^{M}F$ of $F$ is defined as the quotient group 
\begin{equation*}
K_{q}^{M}F := 
\overbrace{F^{\times}\otimes_{\Z} \cdots \otimes_{\Z} F^{\times}}^{q}  / \langle \textrm{Steinberg relations} \rangle, 
\end{equation*}
where a \textit{Steinberg relation} means an element of the form 
$x_1\otimes \cdots \otimes x_q$ such that $x_i+x_j=1$ for some $i\ne j$. 
The image of an element $x_1\otimes \cdots \otimes x_q$ in $K_{q}^{M}F$ 
is denoted by $\{ x_1, \cdots, x_q\}$. 
When $q=0$, we put $K_{0}^{M}F={\Z}$. 

In this paper we will always work rationally, i.e., 
work with the tensor product $K^{M}_{q}F\otimes_{\Z}\Q$. 
Therefore, for the sake of simplicity, we shall abuse notation to write 
\begin{equation*}
K^{M}_{q}F = K^{M}_{q}F\otimes_{\Z}\Q 
\end{equation*}
throughout this paper. 
For example, $K^{M}_{0}F={\Q}$ and $K_{1}^{M}F=F^{\times}\otimes_{\Z}\Q$ 
in our shortened notation. 
Since the Steinberg symbols $\{ x_1, \cdots, x_q \}$ are alternative, 
the natural ${\Q}$-linear map 
\begin{equation}\label{eqn: wedge to K}
\wedge^{q}(F^{\times}\otimes_{\Z}\Q ) \to K^{M}_{q}F, \qquad 
x_1\wedge \cdots \wedge x_{q} \mapsto  \{ x_1, \cdots, x_q\}, 
\end{equation}
is well-defined. 

Let $F/E$ be a finite extension of fields. 
The embedding $E\hookrightarrow F$ induces a natural ${\Q}$-linear map 
$K^{M}_{q}E\hookrightarrow K^{M}_{q}F$. 
There is also the transfer map 
\begin{equation*}
N_{F/E} : K^{M}_{q}F \twoheadrightarrow K^{M}_{q}E 
\end{equation*}
satisfying some standard properties (see \cite{SS} \S 7.3 and \cite{GS} \S 7.3). 
%
When $F/E$ is a Galois extension with Galois group $G$, 
the embedding $K^{M}_{q}E\hookrightarrow K^{M}_{q}F$ gives an isomorphism 
\begin{equation}\label{eqn: G-inv}
K^{M}_{q}E \simeq (K^{M}_{q}F)^{G}. 
\end{equation}
Under this identification, the transfer map is given by 
\begin{equation}\label{eqn: norm Galois}
N_{F/E}(\alpha) = \sum_{g\in G} g\alpha  \: \in (K^{M}_{q}F)^{G}, \quad \alpha\in K^{M}_{q}F   
\end{equation}
(see \cite{Su} Corollary 1.10).   

\subsection{Tame symbol}\label{ssec: tame}

Suppose that the field $F$ is endowed with a (normalized) discrete valuation $v$. 
We denote by $A$ the valuation ring and $k$ the residue field. 
Then there is a unique ${\Q}$-linear map 
\begin{equation*}
\partial_{v} : K^{M}_{q}F \to K^{M}_{q-1}k 
\end{equation*}
satisfying 
\begin{equation}\label{eqn: tame symbol}
\partial_{v} \{ a, u_2, \cdots, u_q\}  = \nu(a) \cdot \{ \bar{u}_2, \cdots, \bar{u}_q \} 
\end{equation}
for every $a\in F^{\times}$ and units $u_2, \cdots, u_q$ of $A$, 
where $\bar{u}$ stands for the image of $u\in A$ in $k$. 
The map $\partial_{v}$ is called the (higher) \textit{tame symbol}. 

An explicit form of $\partial_{v}$ is given in \cite{SS} Equation (7.2.4) as follows. 
We fix an element $\pi$ of $A$ with $\nu (\pi)=1$. 
For $a_1, \cdots, a_q\in F^{\times}$ we write 
$a_i=u_i\pi^{v(a_i)}$ with $u_i$ a unit in $A$. 
Then 
\begin{equation*}
\partial_{v} \{ a_1, a_2, \cdots, a_q\}  = 
\sum_{i=1}^{q}(-1)^{i-1} \nu(a_i) \cdot \{ \bar{u}_1, \cdots, \bar{u}_{i-1}, \bar{u}_{i+1}, \cdots, \bar{u}_q \}. 
\end{equation*}
This is independent of the choice of $\pi$. 
For example, when $q=1$, $\partial_{v}$ is just the valuation map $\nu$.  
When $q=2$, we have 
\begin{equation}\label{eqn: tame q=2}
\partial_{v} \{ a, b\} = \overline{b^{v(a)}/a^{v(b)}}, \qquad a, b\in F^{\times}. 
\end{equation}

\begin{remark}
When $q\geq 2$, Equation (7.2.4) of \cite{SS} 
contains more terms, but they are $2$-torsion in the integral $K$-group 
and hence vanishes in our rational $K$-group. 
This explains the difference of \eqref{eqn: tame q=2} and 
the more familiar formula, namely the lack of $(-1)^{v(a)v(b)}$. 
\end{remark}


\subsection{Gersten complex}\label{ssec: Gersten}

Let $X$ be an irreducible variety over ${\C}$. 
Let $Z'\subset Z$ be irreducible subvarieties of $X$ with $\dim Z' = \dim Z -1$. 
We write ${\C}(Z)$ for the function field of $Z$ (and similarly for $Z'$). 
Then the boundary map 
\begin{equation*}
\partial^{Z}_{Z'} : K^{M}_{q}{\C}(Z) \to K^{M}_{q-1}{\C}(Z')
\end{equation*}
is defined as follows. 
Let $\pi \colon \hat{Z}\to Z$ be the normalization map and 
$\pi^{-1}(Z')=\sum_{\alpha} Z_{\alpha}'$ be the irreducible decomposition of $\pi^{-1}(Z')$. 
Each divisor $Z_{\alpha}'$ of $\hat{Z}$ defines a discrete valuation on ${\C}(Z) = {\C}(\hat{Z})$ 
with residue field ${\C}(Z_{\alpha}')$. 
We denote by 
\begin{equation*}
\partial_{\alpha} : K^{M}_{q}{\C}(Z) \to K^{M}_{q-1}{\C}(Z_{\alpha}') 
\end{equation*}
the associated tame symbol. 
Next, by the projection $Z_{\alpha}'\to Z'$, we can take the transfer map 
\begin{equation*}
N_{\alpha} : K^{M}_{q-1}{\C}(Z_{\alpha}') \to K^{M}_{q-1}{\C}(Z'). 
\end{equation*}
Then we put 
\begin{equation*}
\partial^{Z}_{Z'} := \sum_{\alpha} N_{\alpha} \circ \partial_{\alpha} \; : \; 
K^{M}_{q}{\C}(Z) \to K^{M}_{q-1}{\C}(Z'). 
\end{equation*}

Now let $p>0$. 
The \textit{Gersten complex} of $X$ is defined as (\cite{Ka})
\begin{equation*}
{\Kp} \: : \; \; 
\cdots \longrightarrow \bigoplus_{Z\in X^{(r)}} K_{p-r}^{M}{\C}(Z) 
\stackrel{\partial}{\longrightarrow} \bigoplus_{Z'\in X^{(r+1)}} K_{p-r-1}^{M}{\C}(Z') \longrightarrow \cdots, 
\end{equation*}
where $X^{(r)}$ is the set of irreducible subvarieties of $X$ of codimension $r$ 
and $\partial=(\partial^{Z}_{Z'})$. 
(When $Z'\not\subset Z$, we set $\partial^{Z}_{Z'}=0$.) 
The term indexed by $X^{(r)}$ is placed in degree $r$. 
Thus ${\Kp}$ has length $p+1$, 
starting from $K^{M}_{p}{\C}(X)$ in degree $0$: 
\begin{equation*}
K^{M}_{p}{\C}(X) \longrightarrow \cdots \longrightarrow 
\bigoplus_{Z\in X^{(p-1)}} {\C}(Z)^{\times}_{{\Q}} \longrightarrow \bigoplus_{Z'\in X^{(p)}} {\Q}[Z']. 
\end{equation*} 
It is proved by Kato \cite{Ka} that 
${\Kp}$ is indeed a complex, i.e., $\partial \circ \partial =0$ 
(see also \cite{GS} \S 8.1).

\subsection{Higher Chow groups}\label{ssec: higher Chow}

Let ${\CH}^p(X, m)$ be the higher Chow group of $X$ of codimension $p$ and ($K$-theoretic) degree $m$ 
as introduced by Bloch \cite{Bl}. 
An element of ${\CH}^p(X, m)$ is called a higher Chow cycle of type $(p, m)$, or simply a $(p, m)$-cycle. 
The Gersten complexes are related to the higher Chow groups as follows (\cite{SS} \S 8.5.d). 

For each $p$, we have the coniveau spectral sequence (\cite{Bl} \S 10) 
\begin{equation*}
E_{1}^{r,s} = \bigoplus_{Z\in X^{(r)}} {\CH}^{p-r}({\C}(Z), -r-s) \; \; \Rightarrow \; \; 
E_{\infty}^{r+s} = {\CH}^{p}(X, -r-s). 
\end{equation*}
The $E_1$-page is supported on the range $s\leq -p$. 
Them the Gersten complex ${\Kp}$ is identified with the edge line $E_1^{\bullet, -p}$, 
tensored with ${\Q}$ and up to sign. 
From a partial degeneration of the spectral sequence, 
we obtain natural isomorphisms (\cite{SS} Corollary 8.5.21) 
\begin{equation}\label{eqn: Gersten higher Chow}
H^{p-m}{\Kp} \simeq {\CH}^p(X, m)_{\Q}, \qquad 0\leq m \leq 2. 
\end{equation}
Via this, a $(p, 1)$-cycle on $X$ can be expressed as a formal sum 
\begin{equation*}
\sum_{i}(Z_i, \psi_i), \qquad Z_i\in X^{(p-1)}, \; \; \psi_i\in {\C}(Z_i)^{\times}, 
\end{equation*}
satisfying the cocycle condition $\sum_i {\rm div}(\psi_i)=0$. 
Similarly, a $(p, 2)$-cycle on $X$ can be expressed as a formal sum 
\begin{equation*}
\sum_{i}(Z_i,  \{ \psi_i, \varphi_i \} ), \qquad Z_i\in X^{(p-2)}, \; \; \psi_i, \varphi_i \in {\C}(Z_i)^{\times}, 
\end{equation*}
with the cocycle condition 
$\sum_i \partial^{Z_i}_{Z'} \{ \psi_i, \varphi_i \} = 0$ 
for any $Z'\in X^{(p-1)}$. 

When $m\geq 3$, the Gersten complexes and the higher Chow groups are related by 
\begin{equation}\label{eqn: CH coniveau}
{\CH}^p(X, m)_{\Q} = E_{\infty}^{-m} \twoheadrightarrow 
E_{\infty}^{p-m,-p} \hookrightarrow E_{2}^{p-m,-p} \simeq H^{p-m}{\Kp}. 
\end{equation}
When $m=3$, 
we have $E_{\infty}^{p-m,-p}=E_{2}^{p-m,-p}$, 
so this reduces to 
\begin{equation}\label{eqn: Chow Gersten p=3}
{\CH}^{p}(X, 3)_{\Q} \twoheadrightarrow H^{p-3}{\Kp}. 
\end{equation}

\section{From Borcherds products to Milnor $K$-group}\label{sec: Borcherds to Milnor}

In this section we define the map from 
$\wedge^{q}M(L)$ to the rational Milnor $K$-group of the modular variety, 
and prove some functorial properties. 

\subsection{Borcherds lift map}

Let $L$ be an even lattice of signature $(2, n)$ with $n>0$. 
Recall that the space $M(L)$ of input modular forms for Borcherds products of weight $0$ for $L$ 
(with rational coefficients) is defined in \eqref{eqn: ML0}. 
We write $M(L)_{{\Z}}\subset M(L)$ for the sub $\Z$-module consisting of those $f$ 
with integral principal part and $c(0, 0)\in 2{\Z}$, and such that the arithmetic normalization $\psi(f)$  makes sense. 
We have 
\begin{equation}\label{eqn: ML0Z}
M(L)_{{\Z}}\otimes_{\Z} \Q =M(L). 
\end{equation}
As in \eqref{eqn: tildeXL}, we write 
$\tilde{X}_{L}={\D}_{L}/{\Ohat}(L)$. 

\begin{lemma}\label{lem: Borcherds lift reformulate}
The Borcherds lifting $f\mapsto \psi(f)$ defines a ${\Q}$-linear map 
\begin{equation}\label{eqn: Borcherds lift q=1}
\psi = \psi_{L} : M(L) \to {\C}(\tilde{X}_{L})^{\times} \otimes_{{\Z}}{{\Q}}. 
\end{equation}
This is equivariant with respect to ${\OL}$. 
\end{lemma}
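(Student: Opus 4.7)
The plan is to construct the map first on the integral submodule $M(L)_{\Z}$ modulo the $\pm 1$ ambiguity, and then extend $\Q$-linearly using \eqref{eqn: ML0Z}. The key observation is that the sign ambiguity in the arithmetic Borcherds lift is $2$-torsion in $\C(\tilde{X}_{L})^{\times}$, and this $2$-torsion is killed upon tensoring with $\Q$.

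First I would note that for $f\in M(L)_{\Z}$, the element $\psi(f)\in \C(\tilde{X}_{L})^{\times}$ is defined up to $\pm 1$: by construction of $M(L)_{\Z}$ it is modular of weight $0$ with trivial character for $\Ohat(L)$, and hence descends to a rational function on $\tilde{X}_{L}={\D}_{L}/\Ohat(L)$. Since the subgroup $\{\pm 1\}$ is $2$-torsion in $\C(\tilde{X}_{L})^{\times}$, it dies in $\C(\tilde{X}_{L})^{\times}\otimes_{\Z}\Q$. Consequently $f\mapsto \psi(f)\otimes 1$ gives a well-defined map of sets $M(L)_{\Z}\to \C(\tilde{X}_{L})^{\times}\otimes_{\Z}\Q$.

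Next I would check $\Z$-linearity of this map using \eqref{eqn: multiplicative}: the identity $\psi(f+g)=\pm\, \psi(f)\cdot \psi(g)$ becomes an equality after killing $\pm 1$, which translates to additivity once the group law on $\C(\tilde{X}_{L})^{\times}\otimes_{\Z}\Q$ is written additively. Since the target is a $\Q$-vector space, this $\Z$-linear map factors uniquely through $M(L)_{\Z}\otimes_{\Z}\Q = M(L)$ by \eqref{eqn: ML0Z}, producing the desired $\Q$-linear map \eqref{eqn: Borcherds lift q=1}. Explicitly, given $f\in M(L)$, choose $N\in \N$ with $Nf\in M(L)_{\Z}$; then $\psi(f):= \psi(Nf)\otimes (1/N)$, and independence of the choice of $N$ follows from $\psi((N_1 N_2)f) = \pm\, \psi(N_1 f)^{N_2} = \pm\, \psi(N_2 f)^{N_1}$.

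Finally, for $\OL$-equivariance, \eqref{eqn: equivariant} gives $\psi(\gamma f)=\pm\, \gamma\psi(f)$ for every $\gamma\in \OL$, which is an equality after passing to $\C(\tilde{X}_{L})^{\times}\otimes_{\Z}\Q$; note that $\OL$ acts on $\tilde{X}_{L}$ through the finite quotient $\OL/\Ohat(L)$. The main obstacle is really just bookkeeping: there is no serious mathematical content beyond the observation that every scaling and sign ambiguity inherited from \S \ref{ssec: Borcherds} is absorbed after tensoring with $\Q$. The non-trivial inputs — compatibility of the arithmetic normalization with addition of input forms and with the $\OL$-action — are exactly the identities \eqref{eqn: multiplicative} and \eqref{eqn: equivariant} already recorded in the preceding section.
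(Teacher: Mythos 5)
Your proof is correct and follows essentially the same route as the paper: define the map on $M(L)_{\Z}$, observe that the $\pm 1$ ambiguity is $2$-torsion and hence dies in ${\C}(\tilde{X}_{L})^{\times}\otimes_{\Z}{\Q}$, get $\Z$-linearity and ${\OL}$-equivariance from \eqref{eqn: multiplicative} and \eqref{eqn: equivariant}, and extend $\Q$-linearly via \eqref{eqn: ML0Z}. The extra detail you supply on the well-definedness of the extension (independence of the denominator $N$) is a standard point the paper leaves implicit.
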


\begin{proof}
Recall that the normalized Borcherds product $\psi(f)$ 
defines a rational function on $\tilde{X}_{L}$ up to the ambiguity of $\pm 1$. 
This ambiguity is a $2$-torsion in ${\C}(\tilde{X}_L)^{\times}$ and 
hence vanishes in ${\C}(\tilde{X}_{L})^{\times}_{\Q}$.
Thus we have the well-defined map 
$M(L)_{{\Z}} \to {\C}(\tilde{X}_{L})^{\times}_{{\Q}}$. 
By \eqref{eqn: multiplicative} and \eqref{eqn: equivariant}, 
this map is ${\Z}$-linear and ${\OL}$-equivariant. 
By \eqref{eqn: ML0Z}, this extends to a ${\Q}$-linear map from $M(L)$. 
\end{proof}

Let $q\geq 0$. 
Composing the exterior product of \eqref{eqn: Borcherds lift q=1} 
with the natural map 
\begin{equation*}
\wedge^{q} {\C}(\tilde{X}_{L})^{\times}_{{\Q}} \to K^{M}_{q} {\C}(\tilde{X}_{L}) 
\end{equation*}
explained in \eqref{eqn: wedge to K}, 
we obtain the ${\rm O}^{+}(L)$-equivariant ${\Q}$-linear map 
\begin{equation}\label{eqn: Borcherds lift general q} 
\psi_{L}  :  \wedge^{q}M(L) \to K^{M}_{q}{\C}(\tilde{X}_L), \quad 
f_1\wedge \cdots \wedge f_q \mapsto \{ \psi(f_1), \cdots, \psi(f_q) \}, 
\end{equation}
where $\psi(f_i)$ is viewed as a rational function on $\tilde{X}_{L}$. 
When $q=0$, this is just the identity of 
${\Q} = \wedge^{0}M(L) = K_0^{M}{\C}(\tilde{X}_{L})$. 

More generally, let ${\G}$ be a subgroup of ${\rm O}^{+}(L)$ containing ${\Ohat}(L)$. 
We set 
\begin{equation*}
G = G_{{\G}} := {\G}/{\Ohat}(L) = 
{\rm Im}({\G}\to {\rm O}(A_L))/\pm 1 
\end{equation*}
and take the $G$-invariant part of \eqref{eqn: Borcherds lift general q}. 
Since 
\begin{equation*}
K^{M}_{q}{\C}(\tilde{X}_L)^{G} = K^{M}_{q}{\C}(\tilde{X}_L/G) = K^{M}_{q}{\C}(X_{{\G}}) 
\end{equation*}
by \eqref{eqn: G-inv} 
where $X_{{\G}}={\D}_{L}/{\G}$, 
this gives a ${\Q}$-linear map 
\begin{equation}\label{eqn: Borcherds lift general q G}
\psi_{{\G}}  :  \wedge^{q}M(L)^{G} \to K^{M}_{q}{\C}(X_{{\G}}). 
\end{equation}
Here we write   
$\wedge^{q}M(L)^{G} = (\wedge^{q}M(L))^{G}$
for simplicity. 
(This should not be confused with $\wedge^{q}(M(L)^{G})$.) 
We call \eqref{eqn: Borcherds lift general q G} the \textit{Borcherds lift map}.

\begin{remark}
Studying the kernel of $\wedge^{q}M(L) \to K^{M}_{q}{\C}(\tilde{X}_L)$ 
will lead us to the quest of Steinberg relation between Borcherds products: 
which two Borcherds products $\psi(f), \psi(g)$ of weight $0$ satisfy 
\begin{equation*}
\psi(f)+\psi(g) \equiv 1 
\end{equation*}  
(or more generally a root of unity). 
Since \textit{sum} of Borcherds products has been rarely considered, this would be a new type of problem. 
\end{remark}

\subsection{Functoriality}

We prove some functorial properties of the Borcherds lift maps. 
We fix the lattice $L$ for a while and let 
$\tilde{{\G}}<{\G}$ be two subgroups of ${\rm O}^{+}(L)$ containing ${\Ohat}(L)$. 
We write $G=G_{{\G}}$, $\tilde{G}=G_{\tilde{{\G}}}$, 
$X=X_{{\G}}$ and $\tilde{X}=X_{\tilde{{\G}}}$. 
Then $\tilde{G}<G$ and we have a natural projection $\tilde{X} \to X$. 
Let $q\geq 0$. 

\begin{lemma}\label{lem: pullback}
The following diagram commutes: 
\begin{equation*}
\xymatrix@C+1pc{
\wedge^{q}M(L)^{G} \ar[r]^{\psi_{{\G}}} \ar@{^{(}->}[d]  & K^{M}_{q}{\C}(X) \ar@{^{(}->}[d]  \\ 
\wedge^{q}M(L)^{\tilde{G}} \ar[r]^{\psi_{\tilde{{\G}}}} & K^{M}_{q}{\C}(\tilde{X}) 
}
\end{equation*}
where $\wedge^{q}M(L)^{G}\hookrightarrow \wedge^{q}M(L)^{\tilde{G}}$ is the natural inclusion 
and $K^{M}_{q}{\C}(X) \hookrightarrow K^{M}_{q}{\C}(\tilde{X})$ 
is the map induced by $\tilde{X} \to X$. 
\end{lemma}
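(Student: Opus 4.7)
The plan is to factor both compositions through the ``universal'' Borcherds lift map $\psi_L\colon \wedge^{q}M(L) \to K^{M}_{q}{\C}(\tilde{X}_L)$ from \eqref{eqn: Borcherds lift general q}, and then reduce the commutativity to the compatibility of the identification \eqref{eqn: G-inv} with the tower of field extensions. Concretely, write $Y=\tilde{X}_L={\D}_{L}/\Ohat(L)$. Then ${\C}(Y)/{\C}(\tilde{X})/{\C}(X)$ is a tower of Galois extensions with groups $\tilde{G}<G$, and by \eqref{eqn: G-inv} we have the identifications
\begin{equation*}
K^{M}_{q}{\C}(X) = K^{M}_{q}{\C}(Y)^{G}, \qquad K^{M}_{q}{\C}(\tilde{X}) = K^{M}_{q}{\C}(Y)^{\tilde{G}},
\end{equation*}
under which the vertical map on the right of the diagram becomes the tautological inclusion of invariant subgroups $K^{M}_{q}{\C}(Y)^{G}\hookrightarrow K^{M}_{q}{\C}(Y)^{\tilde{G}}$.

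Next I would unpack the definition of $\psi_{{\G}}$ and $\psi_{\tilde{{\G}}}$: by construction (see \eqref{eqn: Borcherds lift general q G}), both are obtained from $\psi_L$ by restricting to the respective invariant subspace and then applying the identifications above. In particular, for $\omega=f_1\wedge \cdots \wedge f_q\in \wedge^{q}M(L)^{G}$, the element $\psi_L(\omega)=\{\psi(f_1),\ldots,\psi(f_q)\}\in K^{M}_{q}{\C}(Y)$ is $G$-invariant (hence also $\tilde{G}$-invariant) by the $\OL$-equivariance of \eqref{eqn: Borcherds lift general q}. Then $\psi_{{\G}}(\omega)$ is $\psi_L(\omega)$ viewed in $K^{M}_{q}{\C}(Y)^{G}=K^{M}_{q}{\C}(X)$, while $\psi_{\tilde{{\G}}}$ applied to the image of $\omega$ in $\wedge^{q}M(L)^{\tilde{G}}$ is the same element $\psi_L(\omega)$ viewed in $K^{M}_{q}{\C}(Y)^{\tilde{G}}=K^{M}_{q}{\C}(\tilde{X})$.

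The commutativity of the square then reduces to the assertion that the inclusion $K^{M}_{q}{\C}(X)\hookrightarrow K^{M}_{q}{\C}(\tilde{X})$ induced by $\tilde{X}\to X$ agrees, under the identifications above, with the inclusion of invariant parts $K^{M}_{q}{\C}(Y)^{G}\hookrightarrow K^{M}_{q}{\C}(Y)^{\tilde{G}}$. This is a standard functoriality statement: the inclusion ${\C}(X)\hookrightarrow {\C}(\tilde{X})$ followed by ${\C}(\tilde{X})\hookrightarrow {\C}(Y)$ equals ${\C}(X)\hookrightarrow {\C}(Y)$, and the induced maps on Milnor $K$-groups compose correspondingly, realizing the nested invariant inclusions.

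I do not expect a real obstacle here; the entire content is a diagram chase that exploits the fact that $\psi(f)$ lives canonically on $Y$ (not on $X$ or $\tilde{X}$) and that both $\psi_{{\G}}$ and $\psi_{\tilde{{\G}}}$ are obtained from this common object by the Galois descent \eqref{eqn: G-inv}. The only thing worth care is the well-definedness issue: one must use the rationalization to kill the $\pm 1$ ambiguity of $\psi(f)$ and the multiplication by a natural number in the arithmetic normalization, exactly as in Lemma \ref{lem: Borcherds lift reformulate}, so that $\psi_L(\omega)$ is an honest element of $K^{M}_{q}{\C}(Y)$ to which Galois invariance can be applied.
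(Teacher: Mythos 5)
Your proposal is correct and follows the same route as the paper: both $\psi_{{\G}}$ and $\psi_{\tilde{{\G}}}$ are by definition restrictions of the common map $\psi_{L}\colon \wedge^{q}M(L)\to K^{M}_{q}{\C}(\tilde{X}_{L})$, with the Galois identification \eqref{eqn: G-inv} matching the invariant-part inclusions to the field-extension maps. The paper's own proof is just the one-sentence version of your diagram chase.
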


\begin{proof}
This holds because both $\psi_{{\G}}$ and $\psi_{\tilde{{\G}}}$ are defined as 
restriction of 
$\psi_{L} \colon \! \wedge^{q}\! M(L) \to K^{M}_{q}{\C}(\tilde{X}_L)$. 
\end{proof} 

Next we define the transfer map in the modular side by 
\begin{equation*}
N_{\tilde{{\G}}/{\G}} \: : \: \wedge^{q}M(L)^{\tilde{G}} \to \wedge^{q}M(L)^{G}, \quad 
\omega \mapsto \sum_{\gamma\in G/\tilde{G}} \gamma \omega. 
\end{equation*}
Clearly this is surjective.  

\begin{lemma}\label{lem: push}
The following diagram commutes: 
\begin{equation*}
\xymatrix{
\wedge^{q}M(L)^{\tilde{G}} \ar[r]^{\psi_{\tilde{{\G}}}}  \ar@{->>}[d]_{N_{{\tilde{{\G}}/{\G}}}} 
& K^{M}_{q}{\C}(\tilde{X}) \ar@{->>}[d]^{N_{\tilde{X}/X}} \\ 
\wedge^{q}M(L)^{G} \ar[r]^{\psi_{{\G}}}  & K^{M}_{q}{\C}(X)  
}
\end{equation*}
where $N_{\tilde{X}/X}$ is the $K$-theoretic transfer map for $\tilde{X} \to X$. 
\end{lemma}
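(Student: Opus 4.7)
The plan is to reduce to the Galois covering $\tilde{X}_{L} \to X$, where the Borcherds lift $\psi_{L}\colon \wedge^{q}M(L)\to K^{M}_{q}{\C}(\tilde{X}_{L})$ is globally $G$-equivariant, and then descend using transitivity of the Milnor norm. Concretely, $\C(\tilde{X}_L)/\C(X)$ is Galois with group $G$ (since $X=\tilde X_L/G$), while $\C(\tilde{X})=\C(\tilde{X}_L)^{\tilde G}$ sits as an intermediate field that need not be Galois over $\C(X)$. Writing $N:=N_{\tilde{X}_L/X}$, transitivity of the norm gives $N=N_{\tilde{X}/X}\circ N_{\tilde{X}_L/\tilde X}$, and by \eqref{eqn: norm Galois} applied to the full Galois extension, $N(\alpha)=\sum_{g\in G}g\alpha$ for every $\alpha\in K^{M}_{q}\C(\tilde X_L)$.

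Fix $\omega\in\wedge^{q}M(L)^{\tilde G}$ and set $\alpha=\psi_L(\omega)$. By $\tilde G$-invariance of $\omega$ and equivariance of $\psi_L$, this $\alpha$ lies in $K^{M}_{q}\C(\tilde X_L)^{\tilde G}=K^{M}_{q}\C(\tilde X)$ and equals $\psi_{\tilde\G}(\omega)$. I would compute $N(\alpha)$ in two ways. On the one hand, by the projection formula for Milnor norms applied to the subextension $\C(\tilde X_L)/\C(\tilde X)$, an element already lying in $K^{M}_{q}\C(\tilde X)$ is sent by $N_{\tilde X_L/\tilde X}$ to $[\C(\tilde X_L):\C(\tilde X)]\cdot \alpha=|\tilde G|\cdot\alpha$, so
\[
N(\alpha)=|\tilde G|\cdot N_{\tilde X/X}(\psi_{\tilde\G}(\omega)).
\]
On the other hand, using the Galois formula and $G$-equivariance of $\psi_L$,
\[
N(\alpha)=\sum_{g\in G} g\cdot\psi_L(\omega)=\psi_L\!\left(\sum_{g\in G}g\omega\right)=|\tilde G|\cdot\psi_L\!\left(N_{\tilde\G/\G}(\omega)\right),
\]
where the last equality regroups the sum over $G$ into $\tilde G$-cosets (on each of which $\omega$ is constant). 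Since $N_{\tilde\G/\G}(\omega)$ is $G$-invariant, $\psi_L$ carries it into $K^{M}_{q}\C(X)$ and the image is precisely $\psi_{\G}(N_{\tilde\G/\G}(\omega))$. Equating the two expressions for $N(\alpha)$ and cancelling the scalar $|\tilde G|$ (legitimate in the rational Milnor $K$-group) yields the desired commutativity.

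The main obstacle I anticipate is that $\tilde G$ need not be normal in $G$, so one cannot directly invoke \eqref{eqn: norm Galois} for the extension $\C(\tilde X)/\C(X)$ and regroup a ``Galois sum over $G/\tilde G$.'' The detour through $\tilde X_L$—where the Galois group is the full $G$—combined with transitivity of the Milnor norm and the projection formula (namely, $N_{F/E}$ restricted to $K^{M}_{q}E\subset K^{M}_{q}F$ is multiplication by $[F:E]$) is what makes the argument uniform. Both transitivity and the projection formula for Milnor norms are standard inputs from \cite{SS} Ch.~7 and \cite{GS} Ch.~7, but they are the essential bookkeeping that carries the proof.
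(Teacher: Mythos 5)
Your proof is correct and follows essentially the same route as the paper: both arguments channel everything through the Galois covering $\tilde{X}_{L}\to X$, apply the formula $N(\alpha)=\sum_{g\in G}g\alpha$ there, and use the ${\rm O}^{+}(L)$-equivariance of $\psi_{L}$ to pull the sum inside the Borcherds lift. The only difference is bookkeeping for general $\tilde{G}$: the paper first reduces to $\tilde{{\G}}={\Ohat}(L)$ via transitivity and surjectivity of the transfers, whereas you keep $\tilde{G}$ arbitrary and instead cancel the scalar $|\tilde{G}|$ coming from the projection formula, which is legitimate in the rational Milnor $K$-groups.
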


\begin{proof}
Since both transfer maps satisfy the transitivity and are surjective, 
it suffices to prove the assertion for $\tilde{{\G}}={\Ohat}(L)$. 
Recall that $G$ is the Galois group of $\tilde{X}_{L}\to X$. 
In the case $q=0$, both transfer maps are multiplication by $|G|$ on ${\Q}$. 
Next let $q>0$ and $f_1, \cdots, f_q\in M(L)$. 
By \eqref{eqn: norm Galois}, 
we have the following equalities in $K^{M}_{q}{\C}(\tilde{X}_{L})$: 
\begin{eqnarray*}
 N_{\tilde{X}_{L}/X}(\psi_{L}(f_1\wedge \cdots \wedge f_q))  & = & 
\sum_{\gamma\in G} \gamma \cdot \{ \psi_{L}(f_1), \cdots, \psi_{L}(f_q) \} \\ 
& = & \sum_{\gamma\in G}   \{ \psi_{L}(\gamma f_1), \cdots, \psi_{L}(\gamma f_q) \} \\ 
& = & \sum_{\gamma\in G}  \psi_{L}( \gamma f_1 \wedge \cdots \wedge \gamma f_q ) \\ 
& = & \psi_{L} \left( \sum_{\gamma\in G}  \gamma (f_1 \wedge \cdots \wedge f_q) \right).   
\end{eqnarray*}
By the definition of $\psi_{{\G}}$, this is equal to 
$\psi_{{\G}} ( N_{\tilde{{\G}}/{\G}}(f_1 \wedge \cdots \wedge f_q))$. 
\end{proof}

\begin{remark}
We also have a functoriality with respect to lattices in the following form. 
Let $L_1$ be a finite-index sublattice of $L$. 
Then ${\Ohat}(L_1)\subset {\Ohat}(L)$ inside 
${\rm O}(L_1\otimes {\Q})={\rm O}(L \otimes {\Q})$. 
We have a natural linear map 
$\uparrow^{L_1}_{L}\colon {\C}A_L \hookrightarrow {\C}A_{L_1}$ 
which satisfies 
$\psi_{L_1}(f \! \! \uparrow_{L}^{L_1}) = \psi_{L}(f)$ 
as modular forms on ${\D}_{L_1}={\D}_{L}$ (see \cite{Ma}). 
This property implies that the diagram 
\begin{equation*}
\xymatrix{
\wedge^{q}M(L) \ar[r]^{\psi_{L}} \ar@{^{(}->}[d]_{\uparrow^{L_1}_{L}} & K^{M}_{q}{\C}(\tilde{X}_{L}) \ar@{^{(}->}[d] \\ 
\wedge^{q}M(L_1) \ar[r]^{\psi_{L_1}} & K^{M}_{q}{\C}(\tilde{X}_{L_1}) 
}
\end{equation*}
commutes, where the right vertical map is induced by the projection $\tilde{X}_{L_1}\to \tilde{X}_{L}$.  
\end{remark}


\section{Residue maps}\label{sec: residue}

In this section we define the residue maps in the modular side 
and prove that they correspond to the tame symbols in the $K$-theory side. 

\subsection{Construction}\label{ssec: residue def}

Let $L$ be an even lattice of signature $(2, n)$ with $n>0$ and $L'$ be a sublattice of $L$ of corank $1$. 
Let ${\G}$ be a subgroup of ${\rm O}^{+}(L)$ containing ${\Ohat}(L)$. 
We denote by ${\G}'={\G}_{L'}<{\OLd}$ the image of ${\rm Stab}_{{\G}}(L') \to {\OLd}$. 
By the theory of Nikulin \cite{Ni}, 
we have ${\G}'\supset {\Ohat}(L')$. 
We write $X=X_{{\G}}$ and $X'=X_{{\G}'}$. 
The image of the natural morphism $X'\to X$, i.e., 
the special divisor associated to $L'$, is denoted by $Z$. 
We also write $G=G_{{\G}}$ and $G'=G_{{\G}'}$. 
This setting and notation are fixed throughout \S \ref{sec: residue}. 

For $f\in M(L)_{\ast}$ with integral principal part and $c(0, 0)\in 2{\Z}$, 
we denote by $\tilde{\nu}(f)$ 
the order of the Borcherds product $\psi(f)$ along the divisor ${\D}_{L'}\subset {\D}_{L}$. 
By the formula \eqref{eqn: nu(l)}, this extends to a ${\Q}$-linear map 
$\tilde{\nu} \colon M(L)_{\ast} \to {\Q}$ 
which is expressed as a sum of some coefficients of the principal part. 
Let $r$ be the ramification index of the projection ${\D}_{L}\to X$ along ${\D}_{L'}\to Z$. 
We put 
\begin{equation}\label{eqn: nu}
\nu := r^{-1}\tilde{\nu} \: : \: M(L)_{\ast} \to {\Q}. 
\end{equation}
If $f\in M(L)_{{\Z}}$, then $\nu(f)$ is the order of the Borcherds product $\psi(f)$ 
as a rational function on $X$ along the divisor $Z$. 
Note that $\tilde{\nu}$ does not depend on ${\G}$ but $\nu$ does. 

Let $|_{L'} : M(L)_{\ast} \to M(L')_{\ast}$ be the quasi-pullback map defined in \eqref{eqn: |_L'}. 
For $q>0$ we define the residue map 
\begin{equation*}
{\Res} \: : \; 
\wedge^{q}M(L)_{\ast} \to \wedge^{q-1}M(L')_{\ast} 
\end{equation*}
by 
\begin{eqnarray*}
 & & {\Res}(f_1 \wedge \cdots \wedge f_q)  \\ 
 & := &  
 \sum_{i=1}^{q} (-1)^{i-1}\nu(f_i) \cdot  
 (f_{1}|_{L'}) \wedge \cdots \wedge (f_{i-1}|_{L'}) \wedge (f_{i+1}|_{L'}) \wedge \cdots \wedge (f_{q}|_{L'}). 
\end{eqnarray*}
For example, when $q=1$, this is just the map $\nu\colon M(L)_{\ast}\to {\Q}$. 
When $q=2$, we have 
\begin{equation}\label{eqn: residue q=2}
{\rm Res}(f_1\wedge f_2)  =  
\nu(f_1)f_2|_{L'} - \nu(f_2) f_1|_{L'}  
 =  (\nu(f_1)f_2 - \nu(f_2) f_1)|_{L'} 
\end{equation}
by the linearity of the operation $|_{L'}$. 
Note that the map ${\Res}$ depends on ${\G}$ (up to constant) because $\nu$ does so. 

\begin{proposition}\label{prop: M(L)0 M(L')0}
We have 
${\rm Res}(\wedge^{q}M(L))\subset \wedge^{q-1}M(L')$. 
\end{proposition}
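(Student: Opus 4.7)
The plan is to construct a linear map with kernel exactly $\wedge^{q-1} M(L')$ and show that ${\rm Res}$ lands in this kernel. I would take this map to be the interior product $\iota_{c}: \wedge^{q-1} M(L')_{\ast} \to \wedge^{q-2} M(L')_{\ast}$ with the constant-term functional $c = c_{L'}(0,0) : M(L')_{\ast} \to {\Q}$. Since $M(L') = \ker(c)$, we have $\ker(\iota_c) = \wedge^{q-1} M(L')$: trivially if $c\equiv 0$, and otherwise via a splitting $M(L')_{\ast} = M(L') \oplus {\Q} h$ with $c(h)\ne 0$, under which $\wedge^{q-1} M(L')_{\ast} = \wedge^{q-1} M(L') \oplus \wedge^{q-2}M(L') \wedge h$ and $\iota_c$ kills the first summand while being an isomorphism (up to sign) on the second.

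The decisive input is the identity $c(f|_{L'}) = 2r\,\nu(f)$ for all $f \in M(L)$, obtained from the weight formula for quasi-pullbacks. Since $c(0,0)(f) = 0$ by hypothesis, $\psi_L(f)$ has weight $0$. As $L'$ has corank $1$, the lattice $K = (L')^{\perp} \cap L$ is negative-definite of rank $1$, and the only primitive vectors up to sign are $\pm v$, so the quasi-pullback divides by the single factor $(\cdot, v)^{\tilde{\nu}_v(f)}$. Linear forms on the period domain are sections of $\mathcal{O}(1)$, i.e., of weight $-1$, so this division raises the weight by $\tilde{\nu}_{v}(f) = \tilde{\nu}(f) = r\nu(f)$. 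Combined with $\psi_L(f)||_{{\D}_{L'}} = C \cdot \psi_{L'}(f|_{L'})$ of \cite{Ma}, \cite{Ze}, the form $\psi_{L'}(f|_{L'})$ has weight $r\nu(f)$, hence $c(f|_{L'}) = 2r\nu(f)$.

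The rest is exterior-algebra formalism. The residue itself has the clean description ${\rm Res} = (|_{L'})^{\wedge (q-1)} \circ \iota_{\nu}$, where $\iota_{\nu}\colon \wedge^{q} M(L)_{\ast} \to \wedge^{q-1} M(L)_{\ast}$ is the interior product with the functional $\nu$ of \eqref{eqn: nu}. By the naturality identity $\iota_{c} \circ \phi^{\wedge k} = \phi^{\wedge (k-1)} \circ \iota_{c \circ \phi}$ applied to the linear map $\phi = |_{L'}$, and using the previous step ($c \circ |_{L'} = 2r\nu$ on $M(L)$), one computes
\begin{equation*}
\iota_{c} \circ {\rm Res}|_{\wedge^q M(L)}
= \iota_{c} \circ (|_{L'})^{\wedge (q-1)} \circ \iota_{\nu}
= 2r \cdot (|_{L'})^{\wedge (q-2)} \circ \iota_{\nu} \circ \iota_{\nu}
= 0,
\end{equation*}
because $\iota_{\nu} \circ \iota_{\nu} = 0$ by the standard antisymmetric cancellation in any exterior algebra (equivalently, the Koszul differential squares to zero). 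Hence ${\rm Res}(\wedge^q M(L)) \subset \ker(\iota_c) = \wedge^{q-1} M(L')$.

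The main obstacle is the constant-term identity $c(f|_{L'}) = 2r\nu(f)$; once the proportionality between the $(0,0)$-Fourier coefficient of the quasi-pullback and the vanishing order $\nu(f)$ is in hand, the conclusion reduces formally to the tautological vanishing $\iota_{\nu}^{2} = 0$ combined with naturality of interior product.
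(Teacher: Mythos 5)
Your proof is correct, and its essential input coincides with the paper's: the paper's argument rests on Lemma \ref{lem: M(L)0 M(L')0}, i.e.\ on the fact (from \cite{Ma} \S 3) that the constant term of $f|_{L'}$ equals that of $f$ plus a multiple of $\tilde{\nu}(f)$, which for $f\in M(L)$ is exactly your identity $c(f|_{L'})=\mathrm{const}\cdot\nu(f)$. (Your re-derivation of this via the weight formula for quasi-pullbacks is legitimate, given the cited fact that $\psi_L(f)||_{{\D}_{L'}}$ is a constant times $\psi_{L'}(f|_{L'})$; the factor $2$ versus the paper's stated ``sum of that of $f$ and $\tilde\nu(f)$'' is a harmless normalization discrepancy that affects neither argument, since only proportionality is used.) Where you genuinely diverge is in the multilinear bookkeeping: the paper proceeds by induction on $q$, first treating the case where all but one $\nu(f_i)$ vanish and then reducing to it by replacing $f_q$ with $f_q-\alpha f_1$, whereas you observe that ${\rm Res}=(|_{L'})^{\wedge(q-1)}\circ\iota_{\nu}$, identify $\wedge^{q-1}M(L')$ as $\ker(\iota_c)$, and conclude from naturality of interior products together with $\iota_\nu^2=0$. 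Your route replaces the induction and basis manipulation with a one-line Koszul-type identity, which is cleaner and makes transparent that the only analytic input is the proportionality $c\circ|_{L'}=\mathrm{const}\cdot\nu$ on $M(L)$; the paper's version avoids setting up the interior-product formalism and the splitting $M(L')_\ast=M(L')\oplus{\Q}h$, which you do handle correctly (including the degenerate case $c\equiv 0$).
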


The proof of this proposition is based on the following observation. 

\begin{lemma}\label{lem: M(L)0 M(L')0}
If $f\in M(L)$ and $\nu(f)=0$, then $f|_{L'}\in M(L')$. 
\end{lemma}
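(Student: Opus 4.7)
The plan is to leverage the precise relationship between the input form's constant coefficient $c(0,0)$ and the weight of its Borcherds product, together with the observation that under the hypothesis $\nu(f)=0$ the quasi-pullback collapses to ordinary restriction.

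First I would note that since $L'\subset L$ has corank $1$ and signature $(2,n-1)$, the orthogonal complement $K=(L')^{\perp}\cap L$ has rank one and is negative definite; choose a primitive generator $\pm l_0$, so that $\D_{L'}=\D_{l_0^{\perp}}$. Then the quasi-pullback denominator $\prod_{\pm l}(\cdot,l)^{\tilde\nu_l(f)}$ ranges over the single pair $\pm l_0$ and reduces to the one factor $(\cdot,l_0)^{\tilde\nu_{l_0}(f)}$. By definition $\tilde\nu_{l_0}(f)$ is the order of $\psi_L(f)$ along $\D_{l_0^{\perp}}=\D_{L'}$, hence coincides with $\tilde\nu(f)=r\nu(f)=0$ under our hypothesis. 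So the denominator is trivial and $\psi_L(f)\|_{\D_{L'}}$ is just the ordinary restriction $\psi_L(f)|_{\D_{L'}}$; in particular $\D_{L'}\not\subset{\rm div}(\psi_L(f))$, so Lemma \ref{lem: pullback weight 0} applies and yields $\psi_L(f)|_{\D_{L'}}=\pm\,\psi_{L'}(f|_{L'})$ up to a constant.

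Next, since $f\in M(L)$ means that the constant coefficient of $f$ equals $0$, the Borcherds product $\psi_L(f)$ has weight $c(0,0)/2=0$. Ordinary restriction of a weight-$0$ meromorphic section of the tautological bundle on $\D_L$ to the sub-domain $\D_{L'}$ is again a weight-$0$ form on $\D_{L'}$, so $\psi_{L'}(f|_{L'})$ has weight $0$. Since the weight of a Borcherds product $\psi_{L'}(g)$ equals $c_g(0,0)/2$, this forces the constant coefficient of $f|_{L'}$ to vanish, i.e., $f|_{L'}\in M(L')$.

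I do not anticipate a genuine obstacle. The one point to keep in mind is that it is precisely the corank-$1$ hypothesis that makes $K$ of rank one, so that the entire quasi-pullback denominator is controlled by the single quantity $\tilde\nu(f)=r\nu(f)$; for higher corank the analogous implication would need to be argued divisor by divisor. A minor bookkeeping issue is that the Borcherds product itself is only defined when $f$ lies in the integral sublattice $M(L)_{\Z}$, but this is harmless: replacing $f$ by a positive integer multiple does not affect the vanishing of $c(0,0)$ for either $f$ or $f|_{L'}$.
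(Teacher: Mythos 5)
Your proof is correct, but it takes a different (more geometric) route than the paper. The paper's proof is a one-line appeal to the coefficient identity from \cite{Ma} \S 3: the constant term $c(0,0)$ of $f|_{L'}$ equals that of $f$ plus $\tilde{\nu}(f)$, and both summands vanish under the hypotheses. You instead re-derive the needed special case by working on the level of Borcherds products: since $K=(L')^{\perp}\cap L$ has rank one, the vanishing $\tilde{\nu}(f)=r\nu(f)=0$ kills the entire quasi-pullback denominator, the operation becomes ordinary restriction of the weight-$0$ form $\psi_L(f)$, and comparing with the weight formula ${\rm weight}=c(0,0)/2$ forces $c_{f|_{L'}}(0,0)=0$. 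Both arguments are sound; note that your key observation (``the ordinary restriction has weight $0$ again, hence so does $\psi_{L'}(f|_{L'})$'') is exactly the step already used inside the paper's proof of Lemma \ref{lem: pullback weight 0}, so there is no circularity but also little that is genuinely new --- you are essentially unpacking the weight bookkeeping that \cite{Ma} packages into the coefficient identity. The paper's citation has the advantage of covering the corank-$1$ coefficient relation in general (not only when $\tilde\nu(f)=0$), which is what makes the induction in Proposition \ref{prop: M(L)0 M(L')0} clean; your version is self-contained modulo the quoted identification $\psi_L(f)\|_{\D_{L'}}=C\cdot\psi_{L'}(f|_{L'})$, and your closing remark correctly identifies why the corank-$1$ hypothesis is what makes the single quantity $\tilde\nu(f)$ control everything.
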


\begin{proof}
In \cite{Ma} \S 3, it is proved that the coefficient $c(0, 0)$ of $f|_{L'}$ is equal to 
the sum of that of $f$ and $\tilde{\nu}(f)$ ($=0$ in the present case). 
Hence $f\in M(L)$ implies $f|_{L'}\in M(L')$. 
\end{proof}

\begin{proof}[(Proof of Proposition \ref{prop: M(L)0 M(L')0})]
We prove by induction on $q$. 
We start with the case $q=2$. 
Let $f_1, f_2\in M(L)$. 
We use the expression \eqref{eqn: residue q=2} and write 
$f=\nu(f_1)f_2 - \nu(f_2) f_1$. 
Then we have 
\begin{equation*}
\nu(f) = \nu(f_1)\nu(f_2) - \nu(f_2) \nu(f_1) = 0. 
\end{equation*}
Since $f\in M(L)$, we have 
${\Res}(f_1\wedge f_2) = f|_{L'}\in M(L')$ 
by Lemma \ref{lem: M(L)0 M(L')0}. 
This proves the assertion in the case $q=2$. 

Suppose that our assertion was proved in degree $q-1$, 
and consider the case of degree $q$. 
Let $f_1, \cdots, f_q\in M(L)$. 
In the case $\nu(f_q)=0$, we have 
\begin{equation*}
{\Res}(f_1 \wedge \cdots \wedge f_q) = 
{\Res}(f_1 \wedge \cdots \wedge f_{q-1})\wedge (f_q|_{L'}) 
\end{equation*}
by the definition of ${\Res}$. 
Then 
${\Res}(f_1 \wedge \cdots \wedge f_{q-1})\in \wedge^{q-2}M(L')$ 
by the assumption of induction, 
while $f_q|_{L'}\in M(L')$ by Lemma \ref{lem: M(L)0 M(L')0}. 

It remains to consider the case where $\nu(f_i)\ne 0$ for any $i$. 
We put $\alpha = \nu(f_q)/\nu(f_1)$. 
Then 
\begin{equation*}
f_1 \wedge f_2 \wedge \cdots \wedge f_q = 
f_1 \wedge f_2 \wedge \cdots \wedge (f_q- \alpha f_1) 
\end{equation*}
and $\nu(f_q- \alpha f_1)=0$, 
so we are reduced to the first case. 
\end{proof}

By Proposition \ref{prop: M(L)0 M(L')0}, we obtain the ${\Q}$-linear map 
\begin{equation*}
{\Res} = {\Res}^{L}_{L'} : \wedge^{q}M(L) \to \wedge^{q-1}M(L'). 
\end{equation*}
Since the map 
$|_{L'}\colon M(L)_{\ast} \to M(L')_{\ast}$ is equivariant for ${\rm Stab}_{{\G}}(L')$ 
and the map $\nu \colon M(L)_{\ast} \to {\Q}$ is invariant for ${\rm Stab}_{{\G}}(L')$, 
we see that the map ${\Res}$ is equivariant for ${\rm Stab}_{{\G}}(L')$. 
Hence it restricts to 
\begin{equation}\label{eqn: residue inv}
{\Res} : \wedge^{q}M(L)^{G} \to \wedge^{q-1}M(L')^{G'} 
\end{equation}
between the relevant invariant parts. 

\subsection{Residue maps and tame symbols}\label{ssec: residue tame}

We prove that 
residue maps correspond to tame symbols by the Borcherds lift map. 

\begin{proposition}\label{prop: residue and tame}
The diagram 
\begin{equation}\label{diagram; residue}
\xymatrix{
\wedge^{q}M(L)^{G} \ar[r]^{\psi_{{\G}}} \ar[d]_{{\rm Res}} & K^{M}_{q}{\C}(X) \ar[d]^{\partial} \\ 
\wedge^{q-1}M(L')^{G'} \ar[r]^-{\psi_{{\G}'}} & K^{M}_{q-1}{\C}(X') 
}
\end{equation}
commutes, 
where $\partial$ is the tame symbol for $Z\subset X$. 
\end{proposition}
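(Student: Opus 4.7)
The plan is to reduce both sides to a simple normal form via multilinearity and the alternating property, and then identify the two resulting expressions using Lemma \ref{lem: pullback weight 0} and the definition $\nu = r^{-1}\tilde\nu$.

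First I would note that both compositions ${\rm Res}$ followed by $\psi_{{\G}'}$ and $\psi_{{\G}}$ followed by $\partial$ are $\Q$-multilinear and alternating in $f_1,\ldots,f_q$. For the modular side this is built into the definition of ${\rm Res}$; for the $K$-theory side it follows from the $\Q$-linearity of $\psi_L\colon M(L)\to \C(\tilde X_L)^{\times}\otimes\Q$ (Lemma \ref{lem: Borcherds lift reformulate}) together with the fact that rational Milnor symbols are alternating. If $\nu(f_i)=0$ for every $i$, both sides vanish: the left by construction of ${\rm Res}$, and the right because each $\psi_L(f_i)$ is then a unit at the generic point of $Z$, so the explicit tame symbol formula gives zero. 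Otherwise I may assume $\nu(f_1)\ne 0$, and for each $i\geq 2$ set $\alpha_i=\nu(f_i)/\nu(f_1)\in\Q$ and $g_i=f_i-\alpha_if_1\in M(L)$, so that $\nu(g_i)=0$. Using $\psi_L(g_i)=\psi_L(f_i)\cdot\psi_L(f_1)^{-\alpha_i}$ in $\C(\tilde X_L)^{\times}\otimes\Q$ and the alternating property, this substitution preserves both $f_1\wedge f_2\wedge\cdots\wedge f_q$ in $\wedge^q M(L)$ and $\{\psi_L(f_1),\ldots,\psi_L(f_q)\}$ in the rational Milnor group.

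I am thus reduced to the case $\nu(f_i)=0$ for $i\geq 2$, and both sides collapse to a single term. On the modular side
\[
{\rm Res}(f_1\wedge f_2\wedge\cdots\wedge f_q)=\nu(f_1)\cdot (f_2|_{L'})\wedge\cdots\wedge(f_q|_{L'}),
\]
whose image under $\psi_{{\G}'}$ is $\nu(f_1)\cdot\{\psi_{L'}(f_2|_{L'}),\ldots,\psi_{L'}(f_q|_{L'})\}$. On the $K$-theory side, since $\psi_L(f_i)$ is a unit at $Z$ for $i\geq 2$, the explicit tame symbol formula \eqref{eqn: tame symbol} gives
\[
\partial\{\psi_L(f_1),\ldots,\psi_L(f_q)\}=v_Z(\psi_L(f_1))\cdot\{\overline{\psi_L(f_2)},\ldots,\overline{\psi_L(f_q)}\},
\]
where $\overline{\psi_L(f_i)}$ denotes the restriction of $\psi_L(f_i)$ to $Z$, viewed in $\C(X')^{\times}$ via $\C(Z)=\C(X')$.

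The agreement of the two expressions then reduces to two identifications. First, $v_Z(\psi_L(f_1))=\nu(f_1)$: this is the very definition of $\nu=r^{-1}\tilde\nu$ in \eqref{eqn: nu}, since pullback along $\D_L\to X$ multiplies orders along $Z$ by the ramification index $r$ of $\D_{L'}\to Z$, and $\tilde\nu(f_1)$ is by construction the order of $\psi_L(f_1)$ along $\D_{L'}$. Second, $\overline{\psi_L(f_i)}=\pm\,\psi_{L'}(f_i|_{L'})$ for each $i\geq 2$: since $\nu(f_i)=0$, the divisor of $\psi_L(f_i)$ does not contain $\D_{L'}$, so Lemma \ref{lem: pullback weight 0} applies and gives $\psi_L(f_i)|_{\D_{L'}}=\pm\,\psi_{L'}(f_i|_{L'})$. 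The $\pm 1$ is $2$-torsion and therefore vanishes in the rational multiplicative group.

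The main obstacle is the reduction step: it relies on the fact that after tensoring with $\Q$, Borcherds lifting becomes genuinely $\Q$-linear into $\C(\tilde X_L)^{\times}\otimes\Q$, so that expressions like $\psi_L(f_1)^{-\alpha_i}$ with $\alpha_i\in\Q$ are well-defined, and all the $\pm 1$ ambiguities from the arithmetic normalization \eqref{eqn: multiplicative}--\eqref{eqn: equivariant} are harmlessly absorbed. Once this is in place, the calculation is essentially a matter of matching signs in the tame symbol formula with those in ${\rm Res}$, and everything reduces to the single case handled via Lemma \ref{lem: pullback weight 0}.
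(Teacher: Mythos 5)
Your proof is correct and follows essentially the same route as the paper's: reduce by multilinearity and the alternating property to the case $\nu(f_i)=0$ for $i\geq 2$, apply the explicit tame symbol formula to the single surviving term, and match the two sides via Lemma \ref{lem: pullback weight 0} and the normalization $\nu=r^{-1}\tilde\nu$. The only organizational difference is that the paper computes the tame symbol upstairs on $\tilde X_L$ along $\tilde Z$ and then descends using the compatibility of tame symbols with pullback, tracking the ramification indices $\tilde r$ and $r/\tilde r$ explicitly, whereas you compute it directly on $X$ along $Z$ by invoking the stated fact that $\nu(f_1)$ equals the order of $\psi_L(f_1)$ on $X$ along $Z$; both are valid.
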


\begin{proof}
We take an element 
$\omega=\sum_{\alpha}f_{1}^{(\alpha)}\wedge \cdots \wedge f_{q}^{(\alpha)}$ 
of $\wedge^{q}M(L)^{G}$ 
where $f_{i}^{(\alpha)}\in M(L)$. 
As in the proof of Proposition \ref{prop: M(L)0 M(L')0}, 
we may assume that 
$\nu(f_{i}^{(\alpha)})=0$ for any $i>1$ and $\alpha$. 
Recall that $X=\tilde{X}_{L}/G$ and $X'=\tilde{X}_{L'}/G'$, 
where $\tilde{X}_{L}$ and $\tilde{X}_{L'}$ are as in \eqref{eqn: tildeXL}. 
Let $\tilde{Z}\subset \tilde{X}_{L}$ be the image of 
$\tilde{X}_{L'}\to \tilde{X}_{L}$. 
We have the finite maps $\tilde{X}_{L'}\to \tilde{Z}\to Z$, 
and correspondingly, 
\begin{equation*}
K^{M}_{q-1}{\C}(X') \; \subset \; K^{M}_{q-1}{\C}(\tilde{Z}) \; \subset \; K^{M}_{q-1}{\C}(\tilde{X}_{L'}). 
\end{equation*}
Then, in this $K$-group, we have 
\begin{eqnarray*}
\psi_{{\G}'}({\Res}(\omega)) 
& = & 
\psi_{L'} \left( \sum_{\alpha}  \nu(f_1^{(\alpha)}) \cdot (f_2^{(\alpha)}|_{L'}) \wedge \cdots \wedge (f_q^{(\alpha)}|_{L'}) \right) \\ 
& = & 
\sum_{\alpha}  \nu(f_1^{(\alpha)}) \cdot \{ \psi_{L'}(f_2^{(\alpha)}|_{L'}), \cdots,  \psi_{L'}(f_q^{(\alpha)}|_{L'}) \} \\ 
& = & 
\sum_{\alpha}  \nu(f_1^{(\alpha)}) \cdot \{ \psi_{L}(f_2^{(\alpha)})|_{\tilde{Z}}, \cdots,  \psi_{L}(f_q^{(\alpha)})|_{\tilde{Z}} \},  
\end{eqnarray*}
where the last equality follows from Lemma \ref{lem: pullback weight 0}. 
Let 
$\tilde{\partial} \colon K_{q}^{M}{\C}(\tilde{X}_{L}) \to K_{q-1}^{M}{\C}(\tilde{Z})$ 
be the tame symbol for $\tilde{Z}\subset \tilde{X}_{L}$ 
and $\tilde{r}$ be the ramification index of ${\D}_{L}\to \tilde{X}_{L}$ along ${\D}_{L'}\to \tilde{Z}$. 
Then $r/\tilde{r}$ is the ramification index of $\tilde{X}_{L}\to X$ along $\tilde{Z}\to Z$. 
Since $(r/\tilde{r})\cdot \nu(f_{1}^{(\alpha)})$ is equal to the order of 
$\psi_{L}(f_{1}^{(\alpha)})$ along $\tilde{Z}\subset \tilde{X}_{L}$, 
we have 
\begin{equation*}
\nu(f_1^{(\alpha)}) \cdot \{ \psi_{L}(f_2^{(\alpha)})|_{\tilde{Z}}, \cdots,  \psi_{L}(f_q^{(\alpha)})|_{\tilde{Z}} \}  = 
(\tilde{r}/r) \cdot \tilde{\partial} \{ \psi_{L}(f_1^{(\alpha)}), \cdots, \psi_{L}(f_q^{(\alpha)}) \} 
\end{equation*}
in $K^{M}_{q-1}{\C}(\tilde{Z})$ by the formula \eqref{eqn: tame symbol} for tame symbol.  
Hence we have 
\begin{eqnarray*}
\psi_{{\G}'}({\Res}(\omega)) 
& = & 
(\tilde{r}/r) \cdot \tilde{\partial} \left(  \sum_{\alpha} \{ \psi_{L}(f_1^{(\alpha)}), \cdots, \psi_{L}(f_q^{(\alpha)}) \} \right) \\ 
& = & 
(\tilde{r}/r) \cdot \tilde{\partial}(\psi_{L}(\omega)) 
\end{eqnarray*}
in $K^{M}_{q-1}{\C}(\tilde{Z})$. 
By the compatibility of tame symbol and pullback (\cite{GS} Remark 7.1.6.2), 
we have 
\begin{equation*}
(\tilde{r}/r) \cdot \tilde{\partial}(\psi_{L}(\omega)) = \partial(\psi_{{\G}}(\omega))  
\end{equation*}
in $K^M_{q-1}{\C}(X')\subset K^M_{q-1}{\C}(\tilde{Z})$. 
This proves our assertion.  
\end{proof}

\begin{remark}\label{rmk: two nu}
To make the diagram \eqref{diagram; residue} commutative is 
the reason of our modification \eqref{eqn: nu} in the definition of $\nu$. 
If we use $\tilde{\nu}$, it does not depend on ${\G}$, 
but instead the diagram \eqref{diagram; residue} commutes only up to constant in general. 
\end{remark}

\subsection{Residue maps and transfers}\label{ssec: residue transfer}

In this subsection we prove 
compatibility of residue maps and transfer maps in the modular side. 
We begin by preparing notations. 
Let $\tilde{{\G}}$ be a subgroup of ${\G}$ containing ${\Ohat}(L)$. 
We put $\tilde{G}=\tilde{{\G}}/{\Ohat}(L)$. 
We choose representatives $\{ L_{\alpha}' \}_{\alpha}$ of $\tilde{{\G}}$-equivalence classes of 
sublattices of $L$ which are ${\G}$-equivalent to $L'$. 
We write ${\G}_{\alpha}'$ 
for the image of ${\rm Stab}_{\tilde{{\G}}}(L_{\alpha}') \to {\rm O}^{+}(L_{\alpha}')$. 
Under the identification $L_{\alpha}'\simeq L'$, 
we have 
\begin{equation*}
{\Ohat}(L') < {\G}_{\alpha}' < {\G}' < {\rm O}^{+}(L'). 
\end{equation*}
We put $G_{\alpha}'={\G}_{\alpha}'/{\Ohat}(L')$. 
The corresponding geometric situation is 
\begin{equation*}\label{eqn: CD transfer geom}
\xymatrix{
\bigsqcup_{\alpha} X_{{\G}_{\alpha}'} \ar[r] \ar[d] & \sum_{\alpha} Z_{\alpha}  \ar@{^{(}->}[r] \ar[d] & \tilde{X} \ar[d] \\ 
X'  \ar[r] & Z \ar@{^{(}->}[r] & X 
}
\end{equation*}
where $\tilde{X}=X_{\tilde{{\G}}}$, 
$Z_{\alpha}\subset \tilde{X}$ is the special divisor for $L_{\alpha}'\subset L$, 
the left horizontal maps are normalization maps, 
and $\sum_{\alpha} Z_{\alpha}$ is the inverse image of $Z$ in $\tilde{X}$. 

\begin{proposition}\label{prop: residue and transfer}
The following diagram commutes: 
\begin{equation*}
\xymatrix@C+2pc{
\wedge^{q}M(L)^{\tilde{G}} \ar[r]^-{({\rm Res}_{\alpha})_{\alpha}} \ar[d]_{N} & 
\bigoplus_{\alpha} \wedge^{q-1}M(L_{\alpha}')^{G_{\alpha}'} \ar[d]^{(N_{\alpha})_{\alpha}} \\ 
\wedge^{q}M(L)^{G} \ar[r]^{{\rm Res}} & \wedge^{q-1}M(L')^{G'} 
}
\end{equation*}
Here ${\rm Res}$ is the residue map for $({\G}; {\G}')$, 
${\rm Res}_{\alpha}$ is the residue map for $(\tilde{{\G}}; {\G}_{\alpha}')$, 
$N$ is the transfer map for $\tilde{{\G}}<{\G}$, and 
$N_{\alpha}$ is the transfer map for ${\G}_{\alpha}'<{\G}'$. 
\end{proposition}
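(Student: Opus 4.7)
The plan is to prove the identity by direct term-by-term comparison, organized via the double-coset decomposition
${\G} = \bigsqcup_{\alpha} \tilde{{\G}}\, g_{\alpha}\, H$,
where $H = {\rm Stab}_{{\G}}(L')$ and $g_{\alpha} \in {\G}$ is chosen with $g_{\alpha} L'_{\alpha} = L'$. This decomposition is precisely the $\tilde{{\G}}$-orbit decomposition of the ${\G}$-orbit of $L'$ in the set of corank-one sublattices of $L$, hence matches the indexing of the representatives $\{L'_{\alpha}\}$ and supplies the identifications $g_{\alpha}\colon M(L'_{\alpha}) \xrightarrow{\sim} M(L')$ implicit in the definition of $N_{\alpha}$.

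By ${\Q}$-linearity, fix a pure wedge $\omega = f_{1} \wedge \cdots \wedge f_{q} \in \wedge^{q} M(L)$; the general $\tilde{G}$-invariant element is a linear combination of $\tilde{G}$-averages of such. For the top-right composition, expand $\Res(N(\omega)) = \sum_{[\gamma] \in G/\tilde{G}} \Res(\gamma \omega)$ and apply the two equivariance identities
\begin{equation*}
\tilde{\nu}_{L'}(\gamma f) = \tilde{\nu}_{\gamma^{-1} L'}(f), \qquad (\gamma f)|_{L'} = \gamma \cdot (f|_{\gamma^{-1} L'}), \quad \gamma \in {\rm O}^{+}(L).
\end{equation*}
The first follows from $\psi_{L}(\gamma f) = \pm\, \gamma \psi_{L}(f)$, since the order of $\gamma \psi_{L}(f)$ along ${\D}_{L'}$ equals that of $\psi_{L}(f)$ along $\gamma^{-1}{\D}_{L'} = {\D}_{\gamma^{-1} L'}$; the second follows from the ${\rm O}^{+}(L)$-equivariance of the pullback $\uparrow_{L}^{L'\oplus K}$ and of the theta contraction appearing in \eqref{eqn: |_L'}. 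Writing $\gamma^{-1} L' = h L'_{\alpha}$ with $h \in \tilde{{\G}}$ and using $\tilde{{\G}}$-invariance of $\omega$, together with the ramification identity $r = \tilde{r}_{\alpha} \cdot e_{\alpha}$ coming from multiplicativity in the tower ${\D}_{L} \to \tilde{X} \to X$ along ${\D}_{L'_{\alpha}} \to Z_{\alpha} \to Z$, each $\alpha$-block becomes, after transport to $L'$ via $g_{\alpha}$, a sum of $G'$-translates of $\Res_{\alpha}(\omega)$ carrying a prefactor $e_{\alpha}^{-1}$.

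For the left-bottom composition, $N_{\alpha}(\Res_{\alpha}(\omega)) = \sum_{\delta \in G'/G'_{\alpha}} \delta \cdot \Res_{\alpha}(\omega)$ is a sum of the same $G'$-translates. The comparison therefore reduces to a counting identity equating the cardinality of $\{[\gamma] \in G/\tilde{G} : \gamma^{-1} L' \sim_{\tilde{{\G}}} L'_{\alpha}\}$ with $e_{\alpha} \cdot |G'/G'_{\alpha}|$, which is the orbit-stabilizer expression of $e_{\alpha}$ as the inertia index of the cover $\tilde{X} \to X$ along $Z_{\alpha} \to Z$; the $e_{\alpha}$-factor cancels the $e_{\alpha}^{-1}$-factor above and the identity follows. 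The main obstacle I anticipate is precisely this combinatorial cancellation, including verifying that the kernels of ${\Ohat}(L)$ and ${\Ohat}(L')$ in the relevant stabilizers match correctly under transport by $g_{\alpha}$, so that no spurious scalar survives; the equivariance identities themselves are routine consequences of results already recalled in \S\ref{sec: Borcherds}.
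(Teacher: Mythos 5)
Your proposal is correct and follows essentially the same route as the paper's proof: expand ${\Res}\circ N$ over cosets of $\tilde{G}$ in $G$, sort the terms according to the $\tilde{{\G}}$-class of $\gamma^{-1}L'$, and use the equivariance of $\tilde{\nu}$ and of quasi-pullback together with multiplicativity of ramification indices so that the inertia factor $e_{\alpha}$ cancels and each block becomes $N_{\alpha}\circ{\Res}_{\alpha}$. The only differences are organizational: the paper first reduces to $\tilde{{\G}}={\Ohat}(L)$ by transitivity and surjectivity of the transfer maps, which makes $\tilde{G}$ trivial --- thereby avoiding the well-definedness issue in your expansion $\sum_{[\gamma]\in G/\tilde{G}}{\Res}(\gamma\omega)$ for a pure wedge $\omega$ that is not itself $\tilde{G}$-invariant (you should either perform this reduction or work with the average $|\tilde{G}|^{-1}\sum_{\gamma\in G}\gamma\omega$ throughout) --- and replaces your double-coset decomposition by the partition $G=\bigsqcup_{\alpha}\gamma_{\alpha}H_{\alpha}$ with $H_{\alpha}/R_{\alpha}\simeq G'/G'_{\alpha}$, which packages your counting identity as the statement that the fibers of $\gamma_{\alpha}H_{\alpha}\to G'/G'_{\alpha}$ are cosets of the inertia group $R_{\alpha}$ of order $e_{\alpha}$.
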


\begin{proof}
By the transitivity and surjectivity of transfer maps, we are reduced to the case 
$\tilde{{\G}} = {\Ohat}(L)$. 
Then $\tilde{G}$ is trivial. 
Since ${\Ohat}(L)$ is normal in ${\G}$, 
each $G_{\alpha}'$ is normal in $G'$ with $G'/G_{\alpha}'$ the Galois group of 
$X_{{\G}_{\alpha}'}\to X'$. 
Let $H_{\alpha}<G$ be the stabilizer of $Z_{\alpha} \leftrightarrow [L_{\alpha}']$ 
and $R_{\alpha}<H_{\alpha}$ be the inertia group. 
Then $|R_{\alpha}|$ is the ramification index of $\tilde{X}_{L}\to X$ along $Z_{\alpha}\to Z$, 
and the quotient $H_{\alpha}/R_{\alpha}$ is identified with $G'/G_{\alpha}'$. 
We have 
$G=\bigsqcup_{\alpha} \gamma_{\alpha} H_{\alpha}$ 
where $\gamma_{\alpha}$ is represented by an element $\gamma_{\alpha}$ of ${\G}$ 
with $\gamma_{\alpha}L_{\alpha}'=L'$ 
(we use the same notation $\gamma_{\alpha}$). 

We take an element $f_1\wedge \cdots \wedge f_q$ of $\wedge^{q}M(L)$. 
Then we have 
\begin{eqnarray*}
& & {\Res} \circ N (f_1\wedge \cdots \wedge f_q) 
 =  
\sum_{\gamma \in G} {\Res} (\gamma f_1\wedge \cdots \wedge \gamma f_q ) \\ 
& = & 
\sum_{\alpha} \sum_{\gamma\in \gamma_{\alpha}H_{\alpha}} \sum_{i=1}^{q} 
(-1)^{i-1} \nu_{L'}(\gamma f_i) \cdot 
(\gamma f_1|_{L'}) \wedge \cdots ({\rm no} \; i) \cdots \wedge (\gamma f_q|_{L'}) \\ 
& = & 
\sum_{\alpha} \sum_{\gamma\in H_{\alpha}} \sum_{i=1}^{q} 
(-1)^{i-1} \nu_{L'}(\gamma_{\alpha} \gamma f_i) \cdot 
(\gamma_{\alpha} \gamma f_1|_{L'}) \wedge \cdots ({\rm no} \; i) \cdots \wedge (\gamma_{\alpha} \gamma f_q|_{L'}) 
\end{eqnarray*}
in $\wedge^{q-1}M(L')$. 
We have 
$(\gamma_{\alpha} \gamma f)|_{L'} = \gamma (f|_{L_{\alpha}'})$ 
under the identification $\gamma_{\alpha} \colon L_{\alpha}'\to L'$. 
Moreover, since $|R_{\alpha}|\cdot \nu_{L'}=\nu_{L_{\alpha}'}$, 
we have 
\begin{equation*}
|R_{\alpha}| \cdot \nu_{L'}(\gamma_{\alpha} \gamma f_i) = 
\nu_{L_{\alpha}'}(\gamma f_i) = \nu_{\gamma^{-1}L_{\alpha}'}(f_i) = \nu_{L_{\alpha}'}(f_i). 
\end{equation*}
Therefore, via $H_{\alpha}/R_{\alpha}\simeq G'/G_{\alpha}'$,  
the above expression is equal to 
\begin{eqnarray*}
& & 
\sum_{\alpha} \sum_{\gamma\in G'/G_{\alpha}'} \sum_{i=1}^{q} 
(-1)^{i-1} \nu_{L_{\alpha}'}(f_i) \cdot 
\gamma (f_1|_{L_{\alpha}'}) \wedge \cdots ({\rm no} \; i) \cdots \wedge \gamma (f_q|_{L_{\alpha}'}) \\ 
& = &  
\sum_{\alpha} \sum_{\gamma\in G'/G_{\alpha}'} 
\gamma ({\Res}_{\alpha}(f_1 \wedge \cdots \wedge f_q)) \\ 
& = & 
\sum_{\alpha}N_{\alpha} \circ {\Res}_{\alpha} (f_1\wedge \cdots \wedge f_q). 
\end{eqnarray*}
This proves our assertion. 
\end{proof}

\section{Borcherds-Gersten complex}\label{sec: BG cpx}

We now come to the main point of this paper. 
We put the constructions in the previous sections together 
to define a modular analogue of the Gersten complex. 
The proof of the required properties will be given in the next \S \ref{ssec: complex}. 

We fix an even lattice $L_0$ of signature $(2, n)$ with $n>0$ 
and a subgroup ${\G}$ of ${\rm O}^{+}(L_0)$ containing ${\Ohat}(L_0)$. 
The modular variety $X=X_{{\G}}$ is the target variety 
whose Gersten complex we want to approximate by Borcherds products. 
The notations $L, L'$ will be used for primitive sublattices of $L_0$ of signature $(2, \ast)$. 
As in \S \ref{ssec: special cycle}, 
we denote by ${\GL}$ the image of ${\rm Stab}_{{\G}}(L)\to {\OL}$. 
We also write $X_L=X_{{\GL}}$ and $G_L={\GL}/{\Ohat}(L)$. 
Recall that $X_L$ is the normalization of the special cycle $Z_L\subset X$ for $L\subset L_0$. 
On the other hand, as in \eqref{eqn: tildeXL}, 
we write $\tilde{X}_{L}=X_{{\Ohat}(L)}$. 
Then $X_L=\tilde{X}_L/G_L$. 
These notations are fixed throughout this section.

\subsection{Boundary map}\label{ssec: boundary map}

Let $L'\subset L$ be sublattices of $L_0$ of corank $r+1$ and $r$ respectively. 
In this subsection, we define the boundary map 
\begin{equation*}
\partial^{L}_{L'} : \wedge^{q}M(L)^{G_L} \to \wedge^{q-1}M(L')^{G_{L'}} 
\end{equation*}
by adjusting the residue map in \S \ref{sec: residue} to the relative setting $L'\subset L\subset L_0$. 
Since what complicates the construction is the possible non-normality of special cycles, 
let us begin by explaining the relevant geometric situation. 

We denote by 
$\mathcal{C}(L', L; {\G})$ 
the set of ${\GL}$-equivalence classes of sublattices of $L$ which are ${\G}$-equivalent to $L'$. 
We choose a representative $\{ L_{\alpha}' \}_{\alpha}$ of $\mathcal{C}(L', L; {\G})$. 
If $\pi_{L}\colon X_L\to Z_L$ is the normalization map, we have 
\begin{equation*}
\pi_{L}^{-1}(Z_{L'}) = \sum_{\alpha} Z_{L_{\alpha}'/L}, 
\end{equation*}
where 
$Z_{L_{\alpha}'/L}\subset X_{L}$ is the special cycle for $L_{\alpha}'\subset L$. 
We denote by ${\G}_{L_{\alpha}'/L}$ 
the image of ${\rm Stab}_{{\GL}}(L_{\alpha}')\to {\rm O}^{+}(L_{\alpha}')$. 
Then $X_{L_{\alpha}'/L}:=X_{{\G}_{L_{\alpha}'/L}}$ is the normalization of $Z_{L_{\alpha}'/L}$. 
If we view ${\G}_{L_{\alpha}'/L}$ as a subgroup of ${\rm O}^{+}(L')$ 
by the ${\G}$-action $L_{\alpha}'\simeq L'$, 
we have 
\begin{equation}\label{eqn: groups 1}
{\Ohat}(L') \: \subset \: {\G}_{L_{\alpha}'/L}  \: \subset \: {\G}_{L'}  \: \subset \: {\rm O}^{+}(L'). 
\end{equation}
We write $G_{\alpha} = {\G}_{L_{\alpha}'/L}/{\Ohat}(L')$. 
For each $\alpha$, the situation is summarized in the commutative diagram 
\begin{equation*}
\xymatrix@C+1pc{
\tilde{X}_{L'} \ar[rr] \ar[d]^{/G_{\alpha}} \ar@/_24pt/[dd]_{/G_{L'}} & & \tilde{X}_{L} \ar[d]^{/G_{L}} & \\ 
X_{L_{\alpha}'/L} \ar[r]^{\pi_{\alpha}}  \ar[d] & Z_{L_{\alpha}'/L} \ar[d] \ar@{^{(}->}[r] & X_L \ar[d]^{\pi_{L}} & \\ 
 X_{L'} \ar[r]^{\pi_{L'}}  & Z_{L'} \ar@{^{(}->}[r] & Z_{L} \ar@{^{(}->}[r] & X
}
\end{equation*}
where various $\pi_{?}$ are normalization maps and 
the left column comes from \eqref{eqn: groups 1}. 

Now we define the boundary map. 
The modular residue map for 
the sublattice $L_{\alpha}'\subset L$ with the groups $({\G}_{L}; {\G}_{L_{\alpha}'/L})$ 
is a map of the form 
\begin{equation*}
{\Res}_{\alpha}:= {\Res}^{L}_{L_{\alpha}'} \; : \; 
\wedge^{q}M(L)^{G_L} \to \wedge^{q-1}M(L')^{G_{\alpha}}. 
\end{equation*}
We compose it with the transfer map 
\begin{equation*}
N_{\alpha} : \wedge^{q-1}M(L')^{G_{\alpha}} \to \wedge^{q-1}M(L')^{G_{L'}}
\end{equation*}
for ${\G}_{L_{\alpha}'/L} \subset {\G}_{L'}$. 
Finally, we take the sum over $\alpha$: 
\begin{equation*}
\partial^{L}_{L'} := \sum_{\alpha} N_{\alpha} \circ {\Res}_{\alpha} : 
\wedge^{q}M(L)^{G_L} \to \wedge^{q-1}M(L')^{G_{L'}}. 
\end{equation*}
This is our boundary map.

\subsection{The complex}\label{ssec: BG cpx}

Now we can define our complex. 
Let $p\leq n$. 
For $r\leq p$, let $L_{0}^{(r)}$ be the set of ${\G}$-equivalence classes of sublattices of $L_0$ of corank $r$. 
This is the same as the set of special cycles of $X$ of codimension $r$. 
Then we form the sequence 
\begin{equation*}
{\Mp} \; : \; \cdots \longrightarrow \bigoplus_{L\in L_0^{(r)}}\wedge^{p-r}M(L)^{G_L} 
\stackrel{\partial}{\longrightarrow} \bigoplus_{L'\in L_0^{(r+1)}}\wedge^{p-r-1}M(L')^{G_{L'}} 
\longrightarrow \cdots 
\end{equation*}
where the term indexed by $L_0^{(r)}$ is placed in degree $r$, 
and $\partial=(\partial^{L}_{L'})$ consists of the boundary maps defined in \S \ref{ssec: boundary map}.  
(We put $\partial^{L}_{L'}=0$ if $Z_{L'}\not\subset Z_{L}$.)  
Thus ${\Mp}$ has length $p+1$, 
starting from $\wedge^{p}M(L_0)^{{\G}}$ in degree $0$: 
\begin{equation*}
\wedge^{p}M(L_0)^{{\G}} \longrightarrow \cdots \longrightarrow 
\bigoplus_{L\in L_{0}^{(p-1)}} M(L)^{G_{L}} \longrightarrow 
\bigoplus_{L'\in L_{0}^{(p)}} {\Q}[L']. 
\end{equation*}

\begin{example}
As the simplest case, suppose that $L_0$ is unimodular. 
Then ${\G}={\rm O}^{+}(L_0)$. 
By the theory of Nikulin (\cite{Ni} \S 1.14), 
every negative-definite even lattice $K$ of rank $r<n/2$ can be primitively embedded in $L_0$, 
uniquely up to ${\rm O}^{+}(L_0)$. 
Therefore, by the correspondence $K\leftrightarrow K^{\perp}$, 
$L_{0}^{(r)}$ can be identified with the set $\mathcal{S}_{r}$ 
of isometry classes of negative-definite even lattices of rank $r$. 
Furthermore, 
$G_{K^{\perp}}$ is the image of ${\rm O}(K)\to {\rm O}(A_K)\simeq {\rm O}(A_{K^{\perp}})$ 
divided by $\pm 1$. 
Thus, in the range $r<n/2$, 
${\Mp}$ can be expressed as 
\begin{equation*}
\wedge^{p}M(L_0) \to \bigoplus_{d\in \mathbb{N}} \wedge^{p-1}M(\langle -2d \rangle^{\perp}) \to 
\cdots \to \bigoplus_{K\in \mathcal{S}_{r}} \wedge^{p-r}M(K^{\perp})^{{\rm O}(K)} \to \cdots 
\end{equation*}
Since $A_{K^{\perp}}\simeq A_{K}(-1)$, 
each $M(K^{\perp})$ can be written purely in terms of $K$. 
\end{example}

%
%
We go back to the general situation. 
For $L\in L_{0}^{(r)}$ we denote by 
\begin{equation*}
\psi_{L} = \psi_{{\GL}} \: : \: \wedge^{q}M(L)^{G_L} \to K^{M}_{q}{\C}(X_L) 
\end{equation*}
the Borcherds lift map for ${\GL}$. 
(The abbreviation $\psi_{L}$ is different from that in \S \ref{sec: Borcherds to Milnor} and \S \ref{sec: residue}.) 
Our main result is the following. 

\begin{theorem}\label{thm: main}
Let $p\leq n$. 
The following holds. 

(1) The sequence ${\Mp}$ is a complex, i.e., $\partial \circ \partial =0$. 

(2) The Borcherds lift maps $\psi_{L}$ for sublattices $L$ of $L_0$ define a morphism 
${\Mp} \to {\Kp}$
of complexes. 
\end{theorem}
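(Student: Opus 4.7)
\textbf{Part (2).} Fix $L \in L_{0}^{(r)}$ and $L' \in L_{0}^{(r+1)}$ with $Z_{L'} \subset Z_{L}$. I would show commutativity of
\[
\xymatrix@C+0.5pc{
\wedge^{q} M(L)^{G_L} \ar[r]^{\psi_L} \ar[d]_{\partial^{L}_{L'}} & K_{q}^{M} {\C}(X_L) \ar[d]^{\partial^{Z_L}_{Z_{L'}}} \\
\wedge^{q-1} M(L')^{G_{L'}} \ar[r]^{\psi_{L'}} & K_{q-1}^{M} {\C}(X_{L'}).
}
\]
Both boundary maps are assembled from the decomposition $\pi_{L}^{-1}(Z_{L'}) = \bigsqcup_{\alpha} Z_{L'_\alpha/L}$ into irreducible components indexed by ${\GL}$-orbits $\{L'_{\alpha}\}$ of sublattices of $L$ that are ${\G}$-equivalent to $L'$: by construction, $\partial^{L}_{L'} = \sum_{\alpha} N_{\alpha} \circ {\Res}_{\alpha}$ on the modular side, and dually $\partial^{Z_L}_{Z_{L'}} = \sum_{\alpha} N_{\alpha}^{K} \circ \partial_{\alpha}^{K}$ on the Gersten side. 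Proposition~\ref{prop: residue and tame}, applied to the groups $({\GL}; {\G}_{L'_\alpha/L})$ and the lattice pair $L'_{\alpha}\subset L$, yields $\partial_{\alpha}^{K} \circ \psi_{L} = \psi_{{\G}_{L'_\alpha/L}} \circ {\Res}_{\alpha}$. Lemma~\ref{lem: push}, applied to ${\G}_{L'_\alpha/L} \subset {\G}_{L'}$, yields $N_{\alpha}^{K} \circ \psi_{{\G}_{L'_\alpha/L}} = \psi_{L'} \circ N_{\alpha}$. Summing over $\alpha$ gives commutativity of the square, and summing over $L$ and $L'$ in each degree gives the morphism of complexes.

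\textbf{Part (1).} I plan to reduce $\partial \circ \partial = 0$ to the $\wedge^{2}$ case, then bootstrap the vanishing of that case from Kato's theorem via Part~(2). Fix $L \in L_{0}^{(r)}$, $L'' \in L_{0}^{(r+2)}$ with $L'' \subset L$, and $f_1, \ldots, f_q \in M(L)$. Using the transitivity $(f|_{L'})|_{L''} = f|_{L''}$ for any intermediate $L'' \subset L' \subset L$ (which reflects the orthogonal decomposition of the theta contractions in \eqref{eqn: |_L'}), a direct sign computation from the definition of ${\Res}$ gives
\[
(\partial \circ \partial)(f_1 \wedge \cdots \wedge f_q)_{L''} = \sum_{i<j} (-1)^{i+j+1}\,(\partial \circ \partial)(f_i \wedge f_j)_{L''} \cdot \omega_{ij}|_{L''},
\]
where $\omega_{ij} = f_1 \wedge \cdots \widehat{f_i} \cdots \widehat{f_j} \cdots \wedge f_q$ and $(\partial \circ \partial)(f_i \wedge f_j)_{L''} \in \wedge^{0} M(L'') = {\Q}$. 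It therefore suffices to treat the case $q=2$. In that case the Borcherds lift in degree zero, $\psi_{L''} : {\Q} = \wedge^{0} M(L'')^{G_{L''}} \to K_{0}^{M} {\C}(X_{L''}) = {\Q}$, is the identity and in particular injective; by Part~(2), $\psi_{L''}((\partial \circ \partial)(f \wedge g)_{L''})$ equals the $Z_{L''}$-component of $(\partial \circ \partial)\{\psi_{L}(f), \psi_{L}(g)\}$, which vanishes because ${\Kp}$ is a complex (Kato). Injectivity of $\psi_{L''}$ then forces $(\partial \circ \partial)(f \wedge g)_{L''} = 0$, and the reduction completes Part~(1).

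\textbf{Main obstacle.} The principal difficulty is the orbit bookkeeping in the reduction step: one must verify that the transfer coefficients $N_{\alpha}$ and the ramification-index normalization $\nu = r^{-1}\tilde{\nu}$ (the modification highlighted in Remark~\ref{rmk: two nu}) align so that the $\Gamma$-orbit sum on the modular side exactly matches the irreducible decomposition of $\pi_{L}^{-1}(Z_{L'})$ on the $K$-theory side, not just up to an overall constant. The transitivity $(f|_{L'})|_{L''} = f|_{L''}$ for nested quasi-pullbacks is technical but expected. Once these routine compatibilities are in place, the essence of the theorem is that $\partial^{2} = 0$ on ${\Mp}$ is extracted from $\partial^{2} = 0$ on ${\Kp}$ through the tautological identification $\wedge^{0} M(L'') = {\Q} = K_{0}^{M} {\C}(Z_{L''})$ at the bottom of the complex.
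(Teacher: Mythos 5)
Your overall strategy coincides with the paper's: Part (2) is assembled from the residue--tame compatibility and the transfer compatibility exactly as you describe, and Part (1) is obtained by reducing to $q=2$ and then bootstrapping from Kato's theorem through the identification $\wedge^{0}M(L'')={\Q}=K_{0}^{M}{\C}(Z_{L''})$, using the injectivity of the degree-zero Borcherds lift map. Your $q=2$ argument is precisely the paper's Step 1. However, two points need attention. First, for $\psi$ to be a morphism of complexes you must also check that the component of $\partial\circ\psi_{L}$ at every irreducible divisor $Y\subset X_{L}$ that is \emph{not} a special divisor vanishes, since ${\Kp}$ sums over all subvarieties while ${\Mp}$ only sees special cycles; this is easy (the divisor of a Borcherds product is supported on special divisors, so all $\psi_{L}(f_i)$ are units along $Y$ and the tame symbol formula \eqref{eqn: tame symbol} gives $0$), but it is a separate assertion (Proposition \ref{prop: BG to G}(2)) that your plan omits.

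Second, and more seriously, the reduction to $q=2$ is not ``a direct sign computation from the definition of ${\Res}$'': the boundary map $\partial^{L}_{L'}$ is $\sum_{\alpha}N_{\alpha}\circ{\Res}_{\alpha}$, so the composite $\partial^{L'}_{L''}\circ\partial^{L}_{L'}$ interleaves residues with transfer sums over group cosets and with sums over ${\GL}$-orbits of sublattices, and the scalar factorization you write down does not literally make sense until these are untangled. This is where the bulk of the paper's proof lives: it first proves the identity $\partial\partial(\omega)=\sum_{i>j}(-1)^{i+j}\partial\partial(f_i\wedge f_j)\cdot\omega_{i,j}$ in the case $L=L_{0}$, ${\G}={\Ohat}(L_{0})$, where the transfers reduce to multiplication by indices because $f|_{L''}$ is already ${\G}_{L''}$-invariant for $f\in M(L)$; it then passes to general ${\G}$ using surjectivity of the modular transfer maps and Proposition \ref{prop: residue and transfer}; and finally it handles general $L\subset L_{0}$ by showing that $\partial\circ\partial$ factors through the Borcherds--Gersten complex of the normalization $X_{L}$ (this is where the non-normality of $Z_{L}$ and the three layers of orbit sets $\mathcal{C}(L',L;{\G})$, $\mathcal{C}(L'',L';{\G})$, $\mathcal{C}(L'',L;{\G})$ enter). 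You correctly flag this bookkeeping as the main obstacle, but it is not a routine verification sitting on top of a finished argument --- it \emph{is} the argument for Part (1) beyond the $q=2$ case, and your proposed identity is only valid after the reductions that make the transfers act as scalars.
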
 

The proof of Theorem \ref{thm: main} will be given in \S \ref{ssec: complex}. 
Before going ahead, 
let us look at its relevance to the higher Chow groups in the rest of this section. 

\begin{remark}\label{rmk: p>n}
We put the assumption $p\leq n$ so that 
$\bigoplus_{L_{0}^{(n)}}{\Q}[L]$ is the only place where special $0$-cycles appear. 
Since we do not have (arithmetic normalization of) Borcherds products in dimension $0$, 
the Borcherds lift map is not well-defined for special $0$-cycles. 
For this reason, in the case $p>n$, we need to truncate ${\Mp}$ at special $0$-cycles. 
\end{remark}

\subsection{Special higher cycles}\label{ssec: special higher cycle}

We recall from \S \ref{ssec: higher Chow} the relation of the Gersten complexes and the higher Chow groups. 
Combining the morphism ${\Mp}\to {\Kp}$ with the isomorphism \eqref{eqn: Gersten higher Chow}, 
we obtain the ${\Q}$-linear map 
\begin{equation}\label{eqn: modular to higher Chow}
H^{p-m}{\Mp} \to {\CH}^{p}(X, m)_{\Q}, \quad 0\leq m \leq 2. 
\end{equation}
We call higher Chow cycles obtained in this way \textit{special higher cycles}. 
When $m\geq 3$, the relation is more indirect: 
\begin{equation*}
H^{p-m}{\Mp} \to H^{p-m}{\Kp} \leftarrow {\CH}^{p}(X, m)_{\Q}, 
\end{equation*}
where the right map is the edge map \eqref{eqn: CH coniveau} in the coniveau spectral sequence. 
As explained in \eqref{eqn: Chow Gersten p=3}, this is surjective when $m=3$, 
so in this case we still obtain higher Chow cycles from $H^{p-3}\mathsf{M}^{\bullet}_{p,{\G}}$. 
In the case $m\geq 4$, it would be a natural problem to give a criterion, 
in terms of modular forms, 
for the Gersten cohomology classes coming from ${\Mp}$ 
to lie in the image of ${\CH}^{p}(X, m)_{\Q}$, 
or in other words, to survive until the $E_{\infty}$-page of the coniveau spectral sequence. 

In the following we shall be more specific for each $0\leq m \leq 2$.

\subsubsection{The case $m=0$}

The map 
$\mathsf{M}_{p,{\G}}^{p} \to {\CH}^p(X)_{\Q}$ 
is just the map sending a sublattice $L$ to the class of the special cycle $Z_{L}$. 
The image of $\mathsf{M}_{p,{\G}}^{p-1} \to \mathsf{M}_{p,{\G}}^{p}$ 
is the space of rational equivalences provided by 
Borcherds products on special cycles of one larger dimension.  
In the case $p=1$, a theorem of Bruinier \cite{Br} and Bergeron-Li-Millson-Moeglin \cite{BLMM} says that 
$H^1\mathsf{M}_{1,{\G}}^{\bullet} \to {\rm Pic}(X)_{\Q}$ 
is an isomorphism under suitable condition on $L_{0}$. 
Moreover, a theorem of Bruinier-Raum-Zhang (\cite{BR}, \cite{Zhang}) says that 
the image of $H^p{\Mp}\to {\CH}^p(X)_{\Q}$ is finite-dimensional for every $p$. 
It is plausible that $H^p{\Mp}$ itself has finite dimension too. 


\subsubsection{The case $m=1$}

Special $(p, 1)$-cycles are of the form 
\begin{equation*}
\sum_{i}(Z_{L_i}, \psi(f_i)), 
\end{equation*}
where $Z_{L_i}$ is a special cycle of codimension $p-1$ 
and $\psi(f_i)$ is a Borcherds product on $Z_{L_i}$. 
The cocycle condition is 
$\sum_{i}\nu_{L'}(f_i)=0$ 
for any $L'\subset L_0$ of corank $p$. 
This can be expressed by the Fourier coefficients of $f_i$. 
In the case $p=2$, such cycles were considered by Garcia \cite{Ga} from the viewpoint of theta lifting. 

A $(p, 1)$-cycle is called \textit{decomposable} if it is rationally equivalent to  
a sum of cycles of the form $(Z, \psi)$ 
where $\psi\in \mathcal{O}^{\times}(Z)$ has no zero nor pole. 
(In modular situation, $\psi$ must be constant in most cases by the Koecher principle.) 
The quotient of ${\CH}^p(X, 1)_{\Q}$ by the subspace of decomposable cycles is denoted by 
${\CH}^p(X, 1)_{{\rm ind}, {\Q}}$. 
Since Borcherds products rarely have trivial divisor, 
it will be more natural to pass to the indecomposable part: 
\begin{equation}\label{eqn: indecomposable}
H^{p-1}{\Mp} \to {\CH}^p(X, 1)_{{\rm ind}, {\Q}}. 
\end{equation}
Then $H^{p-1}{\Mp}$ has countable dimension, 
while ${\CH}^p(X, 1)_{{\rm ind}, {\Q}}$ 
is conjectured to have countable dimension at least for $p=2$ and $X$ smooth projective. 
Hence it would be natural to ask 
whether the kernel and cokernel of \eqref{eqn: indecomposable} have finite dimension. 

As an illustration of our construction, we give an example of special $(2, 1)$-cycle. 
(I suspect, however, that this cycle would vanish in $H^{1}\mathsf{M}_{2,{\G}}^{\bullet}$.) 

\begin{example}
We take $L_0=2U\oplus A_1^{(1)}\oplus A_1^{(2)}$ 
where $A_1^{(i)}$ is a copy of $A_1$. 
Let ${\G}={\Ohat}(L_0)$. 
We set $L_i=2U\oplus A_1^{(i)}$ for $i=1, 2$. 
We also choose a $(-2)$-vector $\delta$ from $2U$ and put 
\begin{equation*}
L_3 = \delta^{\perp}\cap 2U \simeq U\oplus \langle 2 \rangle \oplus 2A_1 \simeq U\oplus U(2) \oplus A_1. 
\end{equation*}
We write $L_{ij}=L_i \cap L_j$ for $i\ne j$. 
Then $L_{12}=2U$ and $L_{i3}\simeq U\oplus \langle 2 \rangle \oplus A_1$ for $i=1, 2$. 
It is easy to see that 
$Z_{L_{i}}\cap Z_{L_{j}} = Z_{L_{ij}}$. 

Now $X_{L_1}\simeq X_{L_{2}}$ are the Siegel modular threefold for ${\rm Sp}_{4}({\Z})$. 
We take the modular function $\Delta_{5}^{7}/\chi_{35}$, 
where $\Delta_{5}$ is the product of the ten even theta constants 
and $\chi_{35}$ is the first Siegel modular form of odd weight. 
(The subscript indicates the weight.) 
By Gritsenko-Nikulin \cite{GN}, this is a Borcherds product with 
\begin{equation*}
{\rm div}(\Delta_{5}^{7}/\chi_{35}) = 7 {\rm div}(\Delta_{5}) - {\rm div}(\chi_{35}) = 6 Z_{L_{12}} - Z_{L_{i3}}. 
\end{equation*}
On the other hand, $L_3$ is a simple lattice in the sense of \cite{BEF}. 
Hence there is a Borcherds product $\psi_1$ on $X_{L_3}$ with ${\rm div}(\psi_1)=Z_{L_{13}}$. 
By applying the involution of $L_{3}$ exchanging the two copies of $A_1$, 
we also obtain a Borcherds product $\psi_2$ of the same weight with ${\rm div}(\psi_2)=Z_{L_{23}}$. 
Then $\psi_1/\psi_2$ has weight $0$ and ${\rm div}(\psi_3)=Z_{L_{13}}-Z_{L_{23}}$. 
Thus the element 
\begin{equation*}
(L_1, \Delta_{5}^{7}/\chi_{35}) + (L_2, \chi_{35}/\Delta_{5}^{7}) + (L_3, \psi_1/\psi_2) 
\end{equation*}
of $\mathsf{M}_{2,{\G}}^{1}$ satisfies the cocycle condition. 
\end{example}

\subsubsection{The case $m=2$}

Special $(p, 2)$-cycles are of the form 
\begin{equation*}
\sum_{i}(Z_{L_i}, \{ \psi(f_i), \psi(g_i) \}), 
\end{equation*}
where $Z_{L_i}$ is a special cycle of codimension $p-2$ and 
$\psi(f_i), \psi(g_i)$ are Borcherds products on $Z_{L_i}$. 
The cocycle condition is 
\begin{equation*}
\sum_{i}\partial^{L_i}_{L'}(f_i\wedge g_i) = 0 
\end{equation*}
for any $L'\subset L_0$ of corank $p-1$. 

For example, when $p=2$, 
we have $L_i=L_{0}$ and $\partial^{L_{0}}_{L'}={\Res}^{L_{0}}_{L'}$ is the simple residue map. 
Then, by \eqref{eqn: residue q=2}, 
the cocycle condition can be written as 
\begin{equation}\label{eqn: cocyle condition (2, 2)}
\left. \left( \sum_{i} \nu_{L'}(f_i) g_i - \nu_{L'}(g_i) f_i \right) \; \right|_{L'} = 0. 
\end{equation}
Sreekantan (\cite{Sr2} \S 6.0.2) asked whether it is possible to express 
the cocycle condition for special $(2, 2)$-cycles in terms of input modular forms. 
The condition \eqref{eqn: cocyle condition (2, 2)} provides an answer in the case $n\geq 2$. 
His question, however, was originally asked when $n=1$, and this case is not covered in Theorem \ref{thm: main} 
as explained in Remark \ref{rmk: p>n}. 
What enters here is CM values of Borcherds products on modular curves.  
This remaining case will be interesting from both $K$-theoretic and modular viewpoints.


\section{Proof of Theorem \ref{thm: main}}\label{ssec: complex}

In this section we prove Theorem \ref{thm: main}. 
We first prove the assertion (2); 
then we prove the assertion (1) by utilizing the Borcherds lift maps. 
We keep the notation in \S \ref{sec: BG cpx}.

\subsection{Borcherds lift map}\label{ssec: Borcherds map Gersten}

Theorem \ref{thm: main} (2) amounts to the following assertion. 

\begin{proposition}\label{prop: BG to G}
Let $L'\subset L \subset L_0$ be as in \S \ref{ssec: boundary map}. 

(1) The following diagram commutes: 
\begin{equation*}
\xymatrix@C+1pc{
\wedge^{q}M(L)^{G_{L}} \ar[r]^{\partial^{L}_{L'}} \ar[d]_{\psi_{L}} & \wedge^{q-1}M(L')^{G_{L'}} \ar[d]^{\psi_{L'}}   \\ 
K^{M}_{q} {\C}(X_{L}) \ar[r]^{\partial^{Z}_{Z'}}  & K^{M}_{q-1}{\C}(X_{L'}) 
}
\end{equation*}
where $\partial^{Z}_{Z'}$ is the $K$-theoretic boundary map for $Z_{L'}\subset Z_{L}$. 

(2) If $Y$ is an irreducible divisor of $X_L$ which is not a special divisor, 
the composition 
\begin{equation*}
\wedge^{q}M(L)^{G_L} \stackrel{\psi_L}{\longrightarrow} K^{M}_{q}{\C}(X_L) 
\stackrel{\partial_{Y}}{\longrightarrow} K^{M}_{q-1}{\C}(Y) 
\end{equation*}
is zero, where $\partial_{Y}$ is the tame symbol for $Y\subset X_L$.  
\end{proposition}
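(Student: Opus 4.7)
The plan is to establish (1) by matching the boundary operators component-by-component via the results of \S\ref{sec: residue}, and to establish (2) via the characterization of the divisor of a Borcherds product.

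For (1), both sides decompose naturally over the same indexing set. By definition, $\partial^{L}_{L'}=\sum_{\alpha}N_{\alpha}\circ \Res_{\alpha}$, where $\alpha$ ranges over the representatives of $\mathcal{C}(L',L;{\G})$. On the $K$-theory side, the Gersten boundary $\partial^{Z}_{Z'}$ is defined using the irreducible components of $\pi_{L}^{-1}(Z_{L'})$ in the normalization $X_{L}$; these are precisely the $Z_{L'_{\alpha}/L}$, with respective normalizations $X_{L'_{\alpha}/L}$, so $\partial^{Z}_{Z'}=\sum_{\alpha}N^{K}_{\alpha}\circ \partial_{\alpha}$ is indexed by the same $\alpha$. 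It therefore suffices to check, for each $\alpha$, the identity
\begin{equation*}
N^{K}_{\alpha}\circ \partial_{\alpha}\circ \psi_{L} \: = \: \psi_{L'}\circ N_{\alpha}\circ \Res_{\alpha}.
\end{equation*}
For the first half I would invoke Proposition~\ref{prop: residue and tame}, applied to $L'_{\alpha}\subset L$ with the group pair $(\GL,\G_{L'_{\alpha}/L})$, to obtain $\partial_{\alpha}\circ \psi_{L}=\psi_{\G_{L'_{\alpha}/L}}\circ \Res_{\alpha}$. For the second, I would invoke Lemma~\ref{lem: push}, applied to the lattice $L'$ with $\G_{L'_{\alpha}/L}<\G_{L'}$ (both containing $\Ohat(L')$ by \eqref{eqn: groups 1}), to obtain $\psi_{L'}\circ N_{\alpha}=N^{K}_{\alpha}\circ \psi_{\G_{L'_{\alpha}/L}}$. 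Composing the two identities yields the desired equality.

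For (2), the key input is the fact, recalled in \S\ref{ssec: Borcherds}, that the divisor of every Borcherds product $\psi_{L}(f)$ is supported on special divisors of $X_{L}$. Since $Y$ is by assumption not special, each $\psi_{L}(f_{i})$ is a unit in $\mathcal{O}_{X_{L},Y}$, i.e., $v_{Y}(\psi_{L}(f_{i}))=0$ for every $i$. The explicit formula for the tame symbol in \S\ref{ssec: tame} then shows that every term of $\partial_{Y}\{\psi_{L}(f_{1}),\ldots,\psi_{L}(f_{q})\}$ carries some factor $v_{Y}(\psi_{L}(f_{i}))$, and hence vanishes. This proves (2).

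The main technical point to be careful about in (1) is the identification of the $K$-theoretic transfer $N^{K}_{\alpha}$ appearing in the Gersten boundary with the transfer appearing in Lemma~\ref{lem: push}. This requires recognizing the finite morphism $X_{L'_{\alpha}/L}\to X_{L'}$, arising as the composition of the normalization $X_{L'_{\alpha}/L}\to Z_{L'_{\alpha}/L}$, the restriction $Z_{L'_{\alpha}/L}\to Z_{L'}$ of $\pi_{L}$, and the normalization $X_{L'}\to Z_{L'}$, as precisely the quotient morphism of modular varieties induced by $\G_{L'_{\alpha}/L}<\G_{L'}$. This identification is implicit in the diagram of \S\ref{ssec: boundary map}; once granted, Lemma~\ref{lem: push} directly supplies the compatibility that closes the argument.
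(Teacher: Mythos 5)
Your part (1) is exactly the paper's argument: the same factorization of both boundary maps over the components $Z_{L'_{\alpha}/L}=\pi_{L}^{-1}(Z_{L'})$, with the left square closed by Proposition \ref{prop: residue and tame} and the right square by Lemma \ref{lem: push}, including the identification of $X_{L'_{\alpha}/L}\to X_{L'}$ with the quotient morphism for ${\G}_{L'_{\alpha}/L}<{\G}_{L'}$. Nothing to add there.

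In part (2) your idea is the right one, but as written there is a small gap: a general element of $\wedge^{q}M(L)^{G_{L}}$ is a sum $\sum_{\alpha}f_{1}^{(\alpha)}\wedge\cdots\wedge f_{q}^{(\alpha)}$ in which the individual $f_{i}^{(\alpha)}$ need not be $G_{L}$-invariant, so the individual Borcherds products $\psi_{L}(f_{i}^{(\alpha)})$ are rational functions on $\tilde{X}_{L}$, not on $X_{L}$; only the total symbol lies in $K^{M}_{q}{\C}(X_{L})=(K^{M}_{q}{\C}(\tilde{X}_{L}))^{G_{L}}$. Hence you cannot apply the explicit tame-symbol formula for $Y\subset X_{L}$ entry-by-entry to the $\psi_{L}(f_{i})$ as elements of ${\C}(X_{L})^{\times}$ --- you have no presentation of the class by functions on $X_{L}$ with controlled divisors. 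The paper's proof is designed around exactly this point: it lifts to an irreducible divisor $\tilde{Y}\subset\tilde{X}_{L}$ over $Y$, applies your vanishing argument there (where the entries genuinely are rational functions whose divisors are supported on special divisors, so their valuations along $\tilde{Y}$ vanish), and then descends using the compatibility of tame symbols with the finite extension ${\C}(X_{L})\subset{\C}(\tilde{X}_{L})$ together with the injectivity of $K^{M}_{q-1}{\C}(Y)\hookrightarrow K^{M}_{q-1}{\C}(\tilde{Y})$. With that one extra step your argument is complete.
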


\begin{proof}
(1) By the definition of $\partial^{L}_{L'}$ and $\partial^{Z}_{Z'}$ 
and the geometric explanation in \S \ref{ssec: boundary map}, 
the diagram in question can be factorized as 
\begin{equation*}
\xymatrix@C+1pc{
\wedge^{q}M(L)^{G_{L}} \ar[r]^-{({\Res}^{M}_{\alpha})} \ar[d]_{\psi_{L}} & 
\bigoplus_{\alpha}\wedge^{q-1}M(L_{\alpha}')^{G_{\alpha}} \ar[d]^{(\psi_{\alpha})} \ar[r]^-{(N_{\alpha}^{M})}  & 
\wedge^{q-1}M(L')^{G_{L'}} \ar[d]^{\psi_{L'}}  \\ 
K^{M}_{q} {\C}(X_{L}) \ar[r]^-{({\Res}_{\alpha}^{K})}  & 
\bigoplus_{\alpha}K^{M}_{q-1} {\C}(X_{L_{\alpha}'/L}) \ar[r]^-{(N_{\alpha}^{K})}  & 
K^{M}_{q-1}{\C}(X_{L'}) 
}
\end{equation*}
Here ${\Res}^{M}_{\alpha}$ is the modular residue map for $({\G}_{L}; {\G}_{L_{\alpha}'/L})$, 
${\Res}^{K}_{\alpha}$ is the tame symbol for $Z_{L_{\alpha}'/L}\subset X_{L}$, 
$N_{\alpha}^{M}$ is the modular transfer map for ${\G}_{L_{\alpha}'/L}\subset {\G}_{L'}$, 
$N_{\alpha}^{K}$ is the $K$-theoretic transfer map for $X_{L_{\alpha}'/L}\to X_{L'}$, and 
$\psi_{\alpha}$ is the Borcherds lift map for ${\G}_{L_{\alpha}'/L}$. 
Then the left square commutes by Proposition \ref{prop: residue and tame}, 
and the right square commutes by Lemma \ref{lem: push}. 

(2) We choose an irreducible divisor $\tilde{Y}$ of $\tilde{X}_{L}$ lying over $Y\subset X_L$ 
and let $\partial_{\tilde{Y}}$ be the tame symbol for $\tilde{Y} \subset \tilde{X}_{L}$. 
We consider the diagram 
\begin{equation*}
\xymatrix@C+1pc{
\wedge^{q}M(L)^{G_{L}} \ar[r]^-{\psi_L} \ar@{^{(}->}[d] & 
K^{M}_{q}{\C}(X_L) \ar[r]^-{\partial_{Y}} \ar@{^{(}->}[d] & 
K^{M}_{q-1}{\C}(Y) \ar@{^{(}->}[d]  \\ 
\wedge^{q}M(L) \ar[r]^-{\psi_L}  & 
K^{M}_{q}{\C}(\tilde{X}_L) \ar[r]^-{\partial_{\tilde{Y}}} & 
K^{M}_{q-1}{\C}(\tilde{Y})
} 
\end{equation*}
The left square commutes by Lemma \ref{lem: pullback}, 
and the right square commutes up to constant 
by the functoriality of tame symbol (\cite{GS} Remark 7.1.6.2). 
Hence it suffices to show that the composition in the lower sequence is zero. 

We take an element $f_1\wedge \cdots \wedge f_q$ of $\wedge^{q}M(L)$. 
Since ${\rm div}(\psi_{L}(f_i))$ is a linear combination of special divisors, 
it does not contain $\tilde{Y}$ in its support. 
Hence 
\begin{equation*}
\partial_{\tilde{Y}} (\psi_{L}(f_1\wedge \cdots \wedge f_q)) = 
\partial_{\tilde{Y}} \{ \psi_{L}(f_1), \cdots, \psi_{L}(f_q) \} = 0. 
\end{equation*}
This proves our assertion. 
\end{proof}

\subsection{Proof of $\partial \circ \partial =0$}

Theorem \ref{thm: main} (1) amounts to the following assertion. 

\begin{proposition}\label{prop: complex}
Let $L''\subset L$ be sublattices of $L_0$ of corank $r+2$ and $r$ respectively. 
Then the composition 
\begin{equation}\label{eqn: localized complex}
\wedge^{q}M(L)^{G_L} \xrightarrow{(\partial^{L}_{L'})}
\bigoplus_{L'} \wedge^{q-1}M(L')^{G_{L'}} \xrightarrow{(\partial^{L'}_{L''})} 
\wedge^{q-2}M(L'')^{G_{L''}} 
\end{equation}
is zero, 
where $L'$ ranges over $L'\in L_0^{(r+1)}$ with 
$Z_{L''}\subset Z_{L'} \subset Z_{L}$. 
\end{proposition}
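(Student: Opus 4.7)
The strategy is to first settle the $q=2$ case using the morphism of sequences of Proposition \ref{prop: BG to G} together with Kato's theorem that ${\Kp}$ is a complex, and then reduce the general $q$ to $q=2$ by a direct computation exploiting the associativity of quasi-pullback.

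For $q=2$ and $\omega=f\wedge g \in \wedge^{2}M(L)^{G_L}$, the composition \eqref{eqn: localized complex} lands in $\wedge^{0}M(L'')^{G_{L''}}={\Q}$. Applying $\psi_{L''}$ and using Proposition \ref{prop: BG to G}(1) for each boundary step (together with Proposition \ref{prop: BG to G}(2) to discard contributions from non-special divisors $Z'$ appearing in the Gersten $\partial\circ\partial$), one obtains
\[
\psi_{L''}\bigl(\partial\circ\partial(\omega)\bigr)=\partial\circ\partial\bigl(\psi_L(\omega)\bigr)=0\quad\text{in}\quad K^{M}_{0}{\C}(X_{L''})={\Q},
\]
where the right-hand equality is Kato's theorem. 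Since $\psi_{L''}$ restricted to $\wedge^{0}M(L'')^{G_{L''}}={\Q}$ is the identity of ${\Q}$ (see the case $q=0$ of the definition in \S \ref{sec: Borcherds to Milnor}), this forces $\partial\circ\partial(\omega)=0$.

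For general $q\geq 2$ and $\omega=f_1\wedge \cdots \wedge f_q$, I will expand $\partial\circ\partial(\omega)$ from the definitions in \S \ref{ssec: boundary map}. The crucial input is the associativity of quasi-pullback, $(f|_{L'})|_{L''}=f|_{L''}$ whenever $L''\subset L'\subset L$, which is a compatibility of the theta-contraction formula \eqref{eqn: |_L'} under iterated orthogonal decomposition of $L\cap L''^{\perp}$ and is implicit in the formalism of \cite{Ma}. With this identity and the antisymmetry of the wedge product, the coefficient of $f_1|_{L''}\wedge \cdots \widehat{f_i|_{L''}} \cdots \widehat{f_j|_{L''}} \cdots \wedge f_q|_{L''}$ in $\partial\circ\partial(\omega)$ becomes, up to a uniform sign $(-1)^{i+j-1}$, the scalar
\[
\sum_{L'}\bigl[\nu_{L'}(f_i)\,\nu_{L''}(f_j|_{L'})-\nu_{L'}(f_j)\,\nu_{L''}(f_i|_{L'})\bigr]\in {\Q},
\]
with the appropriate transfers $N_\alpha$ from $\mathcal{C}(L',L;{\G})$ and $\mathcal{C}(L'',L';{\G})$ inserted inside each summand (compatibility being supplied by Proposition \ref{prop: residue and transfer}). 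This scalar is precisely $\partial\circ\partial(f_i\wedge f_j)\in\wedge^{0}M(L'')^{G_{L''}}$, which vanishes by the $q=2$ case. Hence $\partial\circ\partial(\omega)=0$ for all $q$.

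The main technical obstacle will be the combinatorial sign and transfer bookkeeping in this reduction: one must verify that, after summing over the double cosets $\mathcal{C}(L',L;{\G})$ and $\mathcal{C}(L'',L';{\G})$ with the correct $N_\alpha$'s and passing to the $G_L, G_{L'}, G_{L''}$-invariant subspaces, the coefficient of each $(q-2)$-fold wedge really assembles into the $q=2$ double composition $\partial\circ\partial(f_i\wedge f_j)$ and not into an unwanted linear combination. The quasi-pullback associativity itself is a clean property of the theta-contraction formalism, but its interaction with the invariants and the transfer maps is where the genuine work lies.
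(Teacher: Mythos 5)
Your two core steps coincide with the paper's: the $q=2$ case is settled exactly as you describe (push $\partial\circ\partial$ through the Borcherds lift maps via Proposition \ref{prop: BG to G}, kill the non-special components by part (2), and invoke Kato's theorem, using that $\psi_{L''}$ is the identity on $\wedge^{0}M(L'')^{G_{L''}}={\Q}$), and the reduction of general $q$ to $q=2$ is indeed the direct expansion using the transitivity $(f|_{L'})|_{L''}=f|_{L''}$ of quasi-pullback, with the coefficient of each $\omega_{i,j}=f_1|_{L''}\wedge\cdots(\text{no } i,j)\cdots\wedge f_q|_{L''}$ assembling into $\partial\circ\partial(f_i\wedge f_j)$.

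There is, however, a genuine gap in how you propose to handle general $L$ and ${\G}$. Your coefficient extraction requires that the $(q-2)$-fold wedge $\omega_{i,j}$ split off as a \emph{common} factor independent of the intermediate lattice $L'$ and of the chosen representatives $L''_{\alpha,\gamma}\in\mathcal{C}(L'',L'_{\alpha};\cdot)$; this uses $f_k|_{L''_{\alpha,\gamma}}=f_k|_{L''}$, which holds only when each $f_k$ is invariant under the relevant stabilizers. That is automatic when $L=L_0$ and ${\G}={\Ohat}(L_0)$ (every $f\in M(L_0)$ is ${\Ohat}(L_0)$-invariant since ${\Otilde}(L_0)$ acts trivially on $A_{L_0}$), but a general element of $\wedge^{q}M(L)^{G_L}$ is a sum of wedges of non-invariant forms, and then the quasi-pullbacks to ${\G}$-equivalent sublattices differ by the $G_{L''}$-action, so the coefficients do not assemble into the $q=2$ composition as claimed. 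The paper resolves this with two further reductions that your sketch does not supply: first, surjectivity of the transfer $N:\wedge^{q}M(L)\twoheadrightarrow\wedge^{q}M(L)^{G_L}$ together with Proposition \ref{prop: residue and transfer} reduces general ${\G}$ to ${\Ohat}(L_0)$; second, and less trivially, for a proper sublattice $L\subsetneq L_0$ one must show that $\partial^{L'}_{L''}\circ\partial^{L}_{L'}$, after commuting the residues past the transfers and regrouping the classes $\mathcal{C}(L'',L';{\G})$ into the classes $\mathcal{C}(L'',L;{\G})$, factors through the Borcherds--Gersten complex of the normalization $X_L$ of $Z_L$ (i.e.\ through the groups ${\G}_{L'/L}$ and ${\G}_{L''_{\delta}/L}$), which is what reduces to the already-proved top-level case. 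This regrouping of equivalence classes of sublattices --- forced by the non-normality of the special cycles --- is the substantive content of the general case, and ``inserting the transfers inside each summand'' does not by itself produce it.
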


\begin{proof}
The proof is a succession of reduction, 
eventually reduced to the case $q=2$. 
Since the reduction process is rather long, 
we present the proof in the reverse order for the sake of readability.  
Thus we begin with the case $q=2$, 
gradually extend it, and finally arrive at the general case. 

\begin{step}
Proposition \ref{prop: complex} holds in the case $q=2$. 
\end{step}

\begin{proof}
By Proposition \ref{prop: BG to G}, we have the commutative diagram 
\begin{equation*}
\xymatrix@C+1pc{
\wedge^{2}M(L)^{G_{L}} \ar[r] \ar[d]_{\psi_{L}} & 
\bigoplus_{L'} M(L')^{G_{L'}} \ar[d]^{(\psi_{L'})} \ar[r] & {\Q}[L''] \ar[d]^{=} \\ 
K_{2} {\C}(X_{L}) \ar[r] & 
\bigoplus_{Z'}{\C}(Z')^{\times} \ar[r]  & {\Q}[Z_{L''}]
}
\end{equation*}
The lower sequence is a part of the Gersten complex for $X$, 
where $Z'$ ranges over $Z'\in X^{(r+1)}$ with $Z_{L''}\subset Z' \subset Z_{L}$. 
Since the right vertical map is an isomorphism, 
the fact that the lower sequence is a complex (\cite{Ka}) implies that 
the upper sequence is so too. 
\end{proof}

Next we consider the case $r=0$ with $G_L$ trivial. 

\begin{step}
Proposition \ref{prop: complex} holds in the case $L=L_0$, ${\G}={\Ohat}(L)$. 
\end{step}

\begin{proof}
By our assumption, the sequence \eqref{eqn: localized complex} takes the form 
\begin{equation*}
\wedge^{q}M(L) \stackrel{({\Res}^{L}_{L'})}{\longrightarrow} 
\bigoplus_{L'} \wedge^{q-1}M(L')^{G_{L'}} \stackrel{(\partial^{L'}_{L''})}{\longrightarrow} 
\wedge^{q-2}M(L'')^{G_{L''}}. 
\end{equation*}
%
We first calculate $\partial_{L''}^{L'} \circ {\Res}^{L}_{L'}$ for each $L'$ with $L''\subset L' \subset L$. 
We begin with ${\Res}^{L'}_{L''}\circ {\Res}^{L}_{L'}$. 
We take an element 
$\omega = f_1\wedge \cdots \wedge f_q$ of $\wedge^q M(L)$. 
Then 
\begin{eqnarray*}
& & {\Res}^{L'}_{L''} ({\Res}^{L}_{L'}(\omega))  \\ 
& = & 
{\Res}^{L'}_{L''} \left( \sum_{i=1}^{q} (-1)^{i-1}\nu_{L'}(f_i)\cdot (f_1|_{L'}) \wedge \cdots (\textrm{no} \; i) \cdots \wedge (f_q|_{L'}) \right) \\ 
& = & 
\sum_{i=1}^{q} \sum_{j=1}^{q-1} (-1)^{i-1}(-1)^{j-1} \nu_{L'}(f_i) \cdot \nu_{L''}(f_{\tilde{j}}|_{L'}) \cdot 
(f_1|_{L'})|_{L''} \wedge \cdots (\textrm{no} \; i, \tilde{j}) \cdots \wedge (f_q|_{L'})|_{L''} 
\end{eqnarray*}
where $\tilde{j}=j$ if $j<i$ and $\tilde{j}=j+1$ if $j\geq i$. 
By the transitivity of quasi-pullback, we have 
$(f|_{L'})|_{L''} = f|_{L''}$. 
Hence, rewriting $\tilde{j}$ as $j$ and putting 
\begin{equation*}
\omega_{i,j} = f_1|_{L''} \wedge \cdots (\textrm{no} \; i, j) \cdots \wedge f_q|_{L''}, 
\end{equation*} 
we obtain  
\begin{equation*}
{\Res}^{L'}_{L''} ({\Res}^{L}_{L'}(\omega)) = 
\sum_{i\ne j} (-1)^{i+j+\varepsilon(i, j)} \nu_{L'}(f_i) \cdot \nu_{L''}(f_{j}|_{L'}) \cdot \omega_{i,j}, 
\end{equation*}
where $\varepsilon(i, j)=0$ if $j<i$ and $\varepsilon(i, j)=1$ if $j>i$. 
It is convenient to rewrite this as 
\begin{equation*}
{\Res}^{L'}_{L''} ({\Res}^{L}_{L'}(\omega)) =  
\sum_{i>j} (-1)^{i+j} \{ \nu_{L'}(f_i) \nu_{L''}(f_{j}|_{L'}) - \nu_{L'}(f_j) \nu_{L''}(f_{i}|_{L'}) \} \cdot \omega_{i,j}.  
\end{equation*}

The next step is to apply the transfer map of ${\G}_{L''/L'}<{\G}_{L''}$. 
If $f\in M(L)$, $f|_{L''}$ is not just ${\G}_{L''/L'}$-invariant but already ${\G}_{L''}$-invariant. 
By the above expression, we find that 
${\Res}^{L'}_{L''} ({\Res}^{L}_{L'}(\omega))$ is already ${\G}_{L''}$-invariant. 
Hence the transfer map for ${\Res}^{L'}_{L''} ({\Res}^{L}_{L'}(\omega))$ 
is just the multiplication by $[{\G}_{L''}:{\G}_{L''/L'}]$. 

The final step is to take the sum over 
$\mathcal{C}(L'', L'; {\G})=\{ L_{\alpha}'' \}$. 
(See \S \ref{ssec: boundary map} for the notation.) 
For each $\alpha$, 
we have $f|_{L_{\alpha}''} = f|_{L''}$ by the ${\G}$-action $L''\simeq L_{\alpha}''$. 
Therefore 
\begin{eqnarray*}
& & \partial^{L'}_{L''} ({\Res}^{L}_{L'} (\omega)) \\ 
& = & 
\sum_{i>j} (-1)^{i+j} \sum_{\alpha} [{\G}_{L_{\alpha}''}:{\G}_{L_{\alpha}''/L'}] \cdot 
\{ \nu_{L'}(f_i) \nu_{L_{\alpha}''}(f_{j}|_{L'}) - \nu_{L'}(f_j) \nu_{L_{\alpha}''}(f_{i}|_{L'}) \} 
\cdot \omega_{i,j}. 
\end{eqnarray*}

On the other hand, 
by substituting $f_i\wedge f_j$ in $\omega$, 
we find that 
\begin{equation*}
\partial^{L'}_{L''} ({\Res}^{L}_{L'} (f_i \wedge f_j))   =  
\sum_{\alpha} [{\G}_{L_{\alpha}''}:{\G}_{L_{\alpha}''/L'}] \cdot 
\{ \nu_{L'}(f_i) \nu_{L_{\alpha}''}(f_{j}|_{L'}) - \nu_{L'}(f_j) \nu_{L_{\alpha}''}(f_{i}|_{L'}) \}. 
\end{equation*}
It follows that 
\begin{equation*}
\partial^{L'}_{L''} ({\Res}^{L}_{L'} (\omega)) = 
\sum_{i>j} (-1)^{i+j} \partial^{L'}_{L''} ({\Res}^{L}_{L'} (f_i \wedge f_j)) \cdot \omega_{i,j}. 
\end{equation*}

Finally, taking the sum over $L'\in L^{(1)}$ with $Z_{L''}\subset Z_{L'} \subset \tilde{X}_{L}$, 
we obtain 
\begin{eqnarray*}
\sum_{L'} \partial^{L'}_{L''} ({\Res}^{L}_{L'} (\omega))  
& = & \sum_{L'} \sum_{i>j} (-1)^{i+j} \partial^{L'}_{L''} ({\Res}^{L}_{L'} (f_i \wedge f_j)) \cdot \omega_{i,j} \\ 
& = & \sum_{i>j} (-1)^{i+j} \left( \sum_{L'}  \partial^{L'}_{L''} ({\Res}^{L}_{L'} (f_i \wedge f_j)) \right) \cdot \omega_{i,j} \\ 
& = & 0,  
\end{eqnarray*}
where the last equality follows from Step 1. 
\end{proof}

The next step is to allow ${\G}$ to be general 
while keeping $L$ to be $L_0$. 

\begin{step}
Proposition \ref{prop: complex} holds in the case 
$L=L_0$, ${\Ohat}(L)<{\G}<{\rm O}^{+}(L)$. 
\end{step}

\begin{proof}
We consider the diagram 
\begin{equation*}
\xymatrix{
\wedge^{q}M(L) \ar[r]^-{{\Res}} \ar[d] & 
\bigoplus_{L'} \wedge^{q-1}M(L')^{\tilde{G}_{L'}} \ar[d] \ar[r]^-{\partial} &  
\bigoplus_{\alpha} \wedge^{q-2}M(L_{\alpha}'')^{\tilde{G}_{L_{\alpha}''}} \ar[d] \\ 
\wedge^{q}M(L)^{G_{L}} \ar[r]^-{{\Res}} & 
\bigoplus_{L'} \wedge^{q-1}M(L')^{G_{L'}} \ar[r]^-{\partial} &  
\wedge^{q-2}M(L'')^{G_{L''}}
}
\end{equation*}
where the upper and lower sequences are 
the Borcherds-Gersten complexes of ${\Ohat}(L)$ and ${\G}$ respectively, 
$L_{\alpha}''$ ranges over ${\Otilde}(L)$-equivalence classes of sublattices of $L$ which are ${\G}$-equivalent to $L''$, 
and the vertical maps are transfer maps. 
By Proposition \ref{prop: residue and transfer}, the left square is commutative. 
Similarly, the right square also commutes. 

By Step 2, we know that the composition in the upper sequence is zero. 
Since the transfer map in the left is surjective, 
this implies that the composition in the lower sequence is also zero. 
\end{proof}

Before going to the general case, 
it will be useful to recall or prepare some notation. 
For two primitive sublattices $L_1\subset L_2$ of $L_0$, 
we denote by $\mathcal{C}(L_1, L_2; {\G})$ 
the set of ${\G}_{L_2}$-equivalence classes of sublattices $L_{1}' $ of $L_2$ 
which are ${\G}$-equivalent to $L_1$. 
We write ${\G}_{L_1/L_2}$ for the image of 
the natural map ${\rm Stab}_{{\G}_{L_2}}(L_1)\to {\rm O}^{+}(L_1)$, 
and write $G_{L_1/L_2} = {\G}_{L_1/L_2}/{\Ohat}(L_1)$. 
We denote by $Z_{L_{1}/L_{2}}\subset X_{L_2}$ the special cycle for $L_1 \subset L_2$ in $X_{L_2}$ (not in $X$). 
We also write 
$X_{L_{1}/L_{2}} = X_{{\G}_{L_1/L_2}}$. 
Then $X_{L_{1}/L_{2}}$ is the normalization of $Z_{L_{1}/L_{2}}$. 

\begin{step}
Proposition \ref{prop: complex} holds in the full generality. 
\end{step}

\begin{proof}
For each $L'$ with $L''\subset L' \subset L$, we write 
\begin{equation*}
\mathcal{C}(L', L; {\G}) = \{ L_{\alpha}' \}_{\alpha}, \quad G_{\alpha}=G_{L_{\alpha}'/L}, 
\end{equation*}
\begin{equation*}
\mathcal{C}(L'', L'; {\G}) = \{ L_{\beta}'' \}_{\beta}, \quad G_{\beta}=G_{L_{\beta}''/L'}.  
\end{equation*}
For each $\alpha$, we choose an isometry $L_{\alpha}'\simeq L'$ given by ${\G}$-action. 
If we view ${\G}_{L_{\alpha}'/L}<{\rm O}^{+}(L_{\alpha}')$ as a subgroup of ${\rm O}^{+}(L')$ 
via $L_{\alpha}'\simeq L'$, we have  
\begin{equation}\label{eqn: inclusion groups I}
{\Ohat}(L') < {\G}_{L_{\alpha}'/L} < {\G}_{L'} < {\rm O}^{+}(L'). 
\end{equation}
We denote by $L_{\alpha}''\subset L_{\alpha}'$ the image of $L''\subset L'$ by $L'\simeq L_{\alpha}'$. 
Then we write   
\begin{equation*}
\mathcal{C}(L_{\alpha}'', L_{\alpha}'; {\G}_{L}) = \{ L_{\alpha, \gamma}'' \}_{\gamma}, \quad 
G_{\alpha, \gamma}={\G}_{L_{\alpha,\gamma}''/L_{\alpha}'/L}/{\Ohat}(L_{\alpha,\gamma}''), 
\end{equation*}
where ${\G}_{L_{\alpha,\gamma}''/L_{\alpha}'/L}$ is the image of 
${\rm Stab}_{{\G}_{L_{\alpha}'/L}}(L_{\alpha,\gamma}'') \to {\rm O}^{+}(L_{\alpha,\gamma}'')$.  
By sending $L_{\alpha,\gamma}''\subset L_{\alpha}'$ back by $L_{\alpha}'\simeq L'$ 
and remembering its ${\G}_{L'}$-equivalence class, 
we have an assignment $\gamma\mapsto \beta$ with 
$L_{\alpha,\gamma}'' \stackrel{{\G}_{L'}}{\sim} L_{\beta}''$. 
Under the identification $L_{\alpha,\gamma}'' \simeq L_{\beta}'' \simeq L''$, 
we have 
\begin{equation}\label{eqn: inclusion groups II}
{\Ohat}(L'') < {\G}_{L_{\alpha,\gamma}''/L_{\alpha}'/L} < {\G}_{L_{\beta}''/L'} < {\G}_{L''} < {\rm O}^{+}(L''). 
\end{equation}

With these notations, 
the map $\partial^{L}_{L'} \circ \partial^{L'}_{L''}$ can be factorized as follows: 
\begin{equation}\label{eqn: CD step 4}
\xymatrix{
\wedge^{q}M(L)^{G_L} \ar[r]^-{{\Res}} \ar[rd]_{\partial^{L}_{L'}} & 
\bigoplus_{\alpha}\wedge^{q-1}M(L_{\alpha}')^{G_{\alpha}} \ar[d] \ar[r]^-{{\Res}} & 
\bigoplus_{\alpha, \gamma}\wedge^{q-2}M(L_{\alpha, \gamma}'')^{G_{\alpha,\gamma}} \ar[d] \\ 
 &  \wedge^{q-1}M(L')^{G_{L'}} \ar[r]^-{{\Res}} \ar[rd]_{\partial^{L'}_{L''}} & 
\bigoplus_{\beta} \wedge^{q-2}M(L_{\beta}'')^{G_{\beta}} \ar[d] \\ 
 & & \wedge^{q-2}M(L'')^{G_{L''}}
}
\end{equation}
Here the two triangles are the defining factorization of each boundary maps, 
the middle vertical map is the transfer map for \eqref{eqn: inclusion groups I}, 
and the right vertical maps are the transfer maps for \eqref{eqn: inclusion groups II}. 
Commutativity of the square follows from Proposition \ref{prop: residue and transfer}. 

On the other hand, we consider 
\begin{equation*}
\mathcal{C}(L'', L; {\G}) = \{ L_{\delta}'' \}_{\delta}, \quad G_{\delta}=G_{L_{\delta}''/L}. 
\end{equation*}
By forgetting the intermediate lattice $L_{\alpha}'$ in 
$L_{\alpha, \gamma}''\subset L_{\alpha}' \subset L$, 
we have an assignment $(\alpha, \gamma)\mapsto \delta$ with 
$L_{\alpha, \gamma}'' \stackrel{{\G}_L}{\sim} L_{\delta}''$. 
Under the identification 
$L_{\alpha, \gamma}'' \simeq L_{\delta}'' \simeq L''$, 
we have 
\begin{equation}\label{eqn: inclusion groups III}
{\Ohat}(L'') < {\G}_{L_{\alpha, \gamma}''/L_{\alpha}'/L} < {\G}_{L_{\delta}''/L} < {\G}_{L''} < {\rm O}^{+}(L''). 
\end{equation}
By comparing \eqref{eqn: inclusion groups II} and \eqref{eqn: inclusion groups III}, 
we see that the composition of the two right vertical maps in \eqref{eqn: CD step 4} factorizes as 
\begin{equation*}
\xymatrix{
\bigoplus_{\alpha, \gamma}\wedge^{q-2}M(L_{\alpha, \gamma}'')^{G_{\alpha,\gamma}}  \ar[r] \ar[d] & 
\bigoplus_{\delta}\wedge^{q-2}M(L_{\delta}'')^{G_{\delta}} \ar[d] \\ 
\bigoplus_{\beta}\wedge^{q-2}M(L_{\beta}'')^{G_{\beta}} \ar[r]  & \wedge^{q-2}M(L'')^{G_{L''}}
}
\end{equation*}
For each $\alpha$, the composition 
\begin{equation*}
\wedge^{q-1}M(L_{\alpha}')^{G_{\alpha}} \stackrel{{\Res}}{\longrightarrow} 
\bigoplus_{\gamma} \wedge^{q-2}M(L_{\alpha, \gamma}'')^{G_{\alpha, \gamma}} \longrightarrow 
\bigoplus_{\delta} \wedge^{q-2}M(L_{\delta}'')^{G_{\delta}} 
\end{equation*}
is the direct sum over $\delta$ of the boundary map from 
$Z_{L_{\alpha}'/L}$ to $Z_{L_{\delta}''/L}$ 
in the Borcherds-Gersten complex for $X_{L}$. 
This shows that $\partial_{L''}^{L'} \circ \partial_{L'}^{L}$ factors through 
\begin{equation*}
\wedge^{q}M(L)^{G_{L}} \stackrel{{\Res}}{\longrightarrow} 
\bigoplus_{\alpha}\wedge^{q-1}M(L_{\alpha}')^{G_{\alpha}} \longrightarrow 
\bigoplus_{\delta} \wedge^{q-2}M(L_{\delta}'')^{G_{\delta}}, 
\end{equation*}
where the maps are the boundary maps in the Borcherds-Gersten complex for $X_L$. 

Finally, taking the sum over $L'\in L_0^{(r+1)}$ with 
$Z_{L''}\subset Z_{L'} \subset Z_{L}$, 
we see that 
$\partial \circ \partial$ factors through 
\begin{equation}\label{eqn: step 4}
\wedge^{q}M(L)^{G_{L}} \stackrel{{\Res}}{\longrightarrow} 
\bigoplus_{L'}\wedge^{q-1}M(L')^{G_{L'/L}} \longrightarrow 
\bigoplus_{\delta} \wedge^{q-2}M(L_{\delta}'')^{G_{\delta}}, 
\end{equation}
where now $L'$ ranges over ${\G}_{L}$-equivalence classes of corank $1$ sublattices of $L$ 
with $Z_{L_{\delta}''/L} \subset Z_{L'/L}$ for some $\delta$. 
This is the sum over $\delta$ 
of the Borcherds-Gersten complex for $X_{L}$ 
from $L$ to $L_{\delta}''\subset L$.   
By Step 3, we find that the composition is zero. 
\end{proof}

The proof of Proposition \ref{prop: complex} is now completed. 
\end{proof}

The geometry behind the argument in Step 4 is the commutative diagram 
\begin{equation*}
\xymatrix{
\bigsqcup_{\alpha,\gamma}X_{\alpha,\gamma} \ar[r]^-{\pi} \ar[ddd] & 
\bigsqcup_{\alpha,\gamma}Z_{\alpha,\gamma} \ar[d] \ar[r] \ar@/_24pt/[ddd] & 
\bigsqcup_{\alpha}X_{\alpha} \ar[d]_{\pi} \ar@/^24pt/[ddd] &  \\ 
 & \sum_{\delta}Z_{\delta} \ar@{^{(}->}[r] \ar[d] & \sum_{\alpha}Z_{\alpha} \ar@{^{(}->}[r] \ar[d] & X_{L}  \ar[d]^{\pi} \\ 
 & Z_{L''} \ar@{^{(}->}[r] & Z_{L'} \ar@{^{(}->}[r] & Z_{L} \\ 
\bigsqcup_{\beta}X_{\beta} \ar[r]^{\pi} & \sum_{\beta}Z_{\beta} \ar@{^{(}->}[r] \ar[u] & X_{L'} \ar[u]^{\pi} &   
}
\end{equation*}
Here $Z_{\alpha}=Z_{L_{\alpha}'/L}$, 
$X_{\alpha}=X_{L_{\alpha}'/L}$, 
$Z_{\alpha,\gamma}\subset X_{\alpha}$ is the special cycle for $L_{\alpha,\gamma}'' \subset L_{\alpha}'$, 
and so on. 
Various $\pi \colon X_{?}\to Z_{?}$ are normalization maps. 
The essence of the argument in Step 4 is just 
passage from the line 
$Z_{L''}\hookrightarrow Z_{L'}\hookrightarrow Z_{L}$ 
to the upper line 
$\sum_{\delta}Z_{\delta} \hookrightarrow \sum_{\alpha}Z_{\alpha} \hookrightarrow X_{L}$. 
Thus, simply speaking,  
Step 4 is the process of taking the normalization $X_L\to Z_L$. 
Similarly, Step 3 is the process of taking the finite cover $\tilde{X}_{L}\to X_{L}$.

\section{Regulators and regularized theta lifts}\label{sec: regulator}

A standard approach in studying higher cycles is to calculate their regulators. 
In this section we gather some remarks on the regulators of special higher cycles, especially of $(p, 1)$-type. 
This is a long supplement to \S \ref{ssec: special higher cycle}. 
What we do is no more than recalling general facts, 
but nevertheless it could be useful to make this explicit. 

\subsection{Regulators}\label{ssec: regulator}

Let $X$ be a smooth projective variety of dimension $n$, 
and $A={\Q}$ or ${\R}$. 
According to Beilinson \cite{Be} and Bloch \cite{Bl}, we have a cycle map 
\begin{equation}\label{eqn: regulator}
{\CH}^{p}(X, m)_{{\Q}} \to H^{2p-m}_{\mathcal{D}}(X, A(p)) 
\end{equation}
to the Deligne cohomology, usually called the \textit{regulator map}. 
Some explicit formulae for this map have been known. 
We refer to \cite{KLM} and the references therein for comprehensive accounts. 
Here, for simplicity, we consider only the case $m=1$ and $A={\R}$. 
The real Deligne cohomology is described as 
\begin{equation*}
H^{2p-1}_{\mathcal{D}}(X, {\R}(p)) \simeq 
\frac{H^{2p-2}(X, {\C})}{F^{p}+H^{2p-2}(X, {\R}(p))} \simeq 
H^{p-1,p-1}(X, {\R}(p-1)). 
\end{equation*}
This can be regarded as a dual of $H^{n-p+1,n-p+1}(X, {\R})$. 
If $\sum_{i}(Z_i, \psi_i)$ is a $(p, 1)$-cycle, 
its real regulator is the functional 
\begin{equation}\label{eqn: regulator (p,1)}
\omega \mapsto (2\pi \sqrt{-1})^{p-n-1} \sum_{i} \int_{Z_i}\log |\psi_{i}| \cdot \omega 
\end{equation}
on $H^{n-p+1,n-p+1}(X, {\R})$. 


\subsection{Shimura surfaces}

The modular variety $X={\D}_{L}/{\G}$ is compact only when $L$ is anisotropic,  
and this happens only when $n\leq 2$. 
So let $L$ be anisotropic of signature $(2, 2)$ and suppose that ${\G}$ is torsion-free. 
Then $X={\D}_{L}/{\G}$ is a smooth projective surface, 
known as a \textit{quaternionic Shimura surface}. 
Let $\sum_{i}(Z_{L_i}, \psi(f_i))$ be a special $(2, 1)$-cycle on $X$.  
By \eqref{eqn: regulator (p,1)} and \eqref{eqn: regularized theta lift}, 
the real regulator of this cycle is given, up to constant, by the functional 
\begin{equation}\label{eqn: regulator regularized theta}
\omega \: \mapsto \: \sum_{i} \int_{Z_{L_i}} \Phi(f_i) \cdot \omega 
\end{equation}
on $H^{1,1}(X, {\R})$, 
where $\Phi(f)$ is the regularized theta lift of $f$. 
When $\omega$ is the class of a special curve $Y\subset X$ 
which intersects properly with $\sum_i (Z_{L_i}, \psi(f_i))$, 
this pairing value can be written as 
\begin{equation*}
\sum_{i} \sum_{p\in Y\cap Z_{L_i}} \Phi(f_i)(p). 
\end{equation*}
In this way, CM values of regularized theta lifts naturally show up in the regulator formula.

\subsection{On the non-compact case}

When $L$ is general, $X={\D}_{L}/{\G}$ is no longer compact. 
There are at least two approaches in this situation: 
\begin{enumerate}
\item To take a (toroidal) compactification $X\hookrightarrow \bar{X}$ 
and study extension of the cycles. 
\item To study the regulator map \eqref{eqn: regulator} in the non-compact case. 
\end{enumerate}
In the rest of this section, we extend our remark for each approach.

\subsubsection{Extension}

We take a toroidal compactification $X\hookrightarrow \bar{X}$ 
and let $D=\bar{X}-X$ be the boundary divisor.  
Let $\xi=\sum_{i}(Z_{L_i}, \psi(f_i))$ be a special $(p, 1)$-cycle on $X$. 
By the localization exact sequence 
\begin{equation*}
\cdots \to {\CH}^{p}(\bar{X}, 1)_{{\Q}} \to {\CH}^{p}(X, 1)_{{\Q}} \stackrel{\delta}{\to} 
{\CH}^{p-1}(D)_{{\Q}} \to {\CH}^{p}(\bar{X})_{{\Q}} \to \cdots 
\end{equation*}
we see that $\xi$ is extendable over $\bar{X}$ if and only if $\delta(\xi)=0$. 
The connecting map $\delta$ is described as follows. 
Let $\bar{Z}_i$ be the closure of $Z_{L_i}$ in $\bar{X}$ 
and $\bar{\psi}_i$ be $\psi(f_i)$ viewed as a rational function on $\bar{Z}_{i}$. 
Suppose that $\bar{\xi}=\sum_{i}(\bar{Z}_i, \bar{\psi}_i)$ intersects properly with $D$. 
The cycle $\sum_{i}{\rm div}(\bar{\psi}_{i})$ is supported on $D$ 
by the cocycle condition for $\xi$. 
Then $\delta(\xi)$ is the class of $\sum_{i}{\rm div}(\bar{\psi}_{i})$ in ${\CH}^{p-1}(D)_{{\Q}}$. 

If $\sum_{i}{\rm div}(\bar{\psi}_{i})=0$ as a cycle 
(not just rationally equivalent to $0$), 
then $\bar{\xi}$ already defines a $(p, 1)$-cycle on $\bar{X}$, 
and this gives an apparent and tractable extension of $\xi$. 
A typical situation where this holds is the case 
each ${\rm div}(\bar{\psi}_{i})$ has no extra component contained in $D$. 

\subsubsection{Non-compact regulator}

The regulator map \eqref{eqn: regulator} is defined even when $X$ is non-compact, 
with values in the Deligne-Beilinson cohomology. 
We take $A={\Q}$. 
The composition of \eqref{eqn: regulator} (with $m=1$) 
and the natural map to the singular cohomology takes the form  
\begin{equation}\label{eqn: non-cpt regulator}
{\CH}^{p}(X, 1)_{{\Q}} \longrightarrow (2\pi \sqrt{-1})^{p} W_{2p}H^{2p-1}(X, {\Q})\cap F^p, 
\end{equation}
where $(W_{\bullet}, F^{\bullet})$ is the weight and Hodge filtrations 
in the mixed Hodge structure on $H^{\ast}(X)$ (see \cite{Be2}). 

In our modular situation, we have the following vanishing: 

\begin{lemma}\label{lem: vanishing discrete part}
Let $X={\D}_{L}/{\G}$. 
The target of \eqref{eqn: non-cpt regulator} vanishes when $p<n/2$. 
\end{lemma}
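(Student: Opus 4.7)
The plan is to analyze the mixed Hodge structure on $H^{2p-1}(X,\Q)$ via a smooth toroidal compactification, reducing the problem to two vanishing statements on pure-weight pieces.

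Fix a smooth projective toroidal compactification $\bar{X} \supset X$ with simple normal crossings boundary $D$ (which exists by Ash-Mumford-Rapoport-Tai), and let $\tilde{D}$ denote the normalization of $D$. The weight filtration on $H^{2p-1}(X,\Q)$ gives a short exact sequence of mixed Hodge structures
\[
0 \to I \to W_{2p}H^{2p-1}(X,\Q) \to B \to 0,
\]
where $I = \im(H^{2p-1}(\bar{X},\Q) \to H^{2p-1}(X,\Q))$ is pure of weight $2p-1$, and $B$ is pure of weight $2p$ and injects via the residue map into $H^{2p-2}(\tilde{D},\Q)(-1)$. Applying the left-exact functor $H \mapsto (2\pi\sqrt{-1})^p H_\Q \cap F^p H_\C$ (which for a MHS computes Hodge classes of the twist $H(p)$) to this sequence, it suffices to show the vanishing on $I$ and on $B$ separately.

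The $I$-piece vanishes unconditionally by Hodge theory. For any pure Hodge structure $H$ of weight $w$, one has $F^p H_\C \cap \overline{F^p H_\C} = \bigoplus_{p \leq i \leq w-p} H^{i,w-i}$, and for $w = 2p-1$ this index range is empty, so the intersection is $0$. If $v \in F^p I_\C$ with $v = (2\pi\sqrt{-1})^p u$ for some $u \in I_\Q$, then complex conjugation gives $\bar{v} = (-1)^p v \in \overline{F^p I_\C}$, forcing $v \in F^p \cap \overline{F^p} = 0$. No modular-variety input is needed at this step, and no condition on $p$ relative to $n$ is used.

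The $B$-piece is the technical heart. For weight $2p$ the condition in question cuts out exactly the rational $(p,p)$-Hodge classes of $B$; via the residue embedding these correspond to rational $(p-1,p-1)$-type classes in $H^{2p-2}(\tilde{D},\Q)$ lying in the kernel of the Gysin map $H^{2p-2}(\tilde{D},\Q)(-1) \to H^{2p}(\bar{X},\Q)$. I would kill these in the range $p < n/2$ by invoking the representation-theoretic structure of cohomology of orthogonal Shimura varieties: Kumaresan / Vogan-Zuckerman vanishing shows that for $\mathrm{SO}(2,n)$ in odd degree $k = 2p-1 < n$, the only cohomological representation contributing to the discrete spectrum is the trivial one, which sits in even degrees and in Hodge type $(k/2,k/2)$. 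This makes the interior cohomology $H^{2p-1}_{!}(X,\Q)$ vanish, and combined with Franke-Harder control of Eisenstein cohomology it forces the rational $(p,p)$-part of $B$ to be trivial.

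The hard part will be precisely this $B$-piece step. The pure weight $2p-1$ vanishing is elementary Hodge-theoretic linear algebra, but extracting the $(p,p)$-vanishing for the weight $2p$ boundary contribution from automorphic methods requires combining the degree-range vanishing of cohomological representations of $\mathrm{SO}(2,n)$ with careful control of how Eisenstein contributions pair against the residue map to toroidal boundary strata. An alternative, and perhaps cleaner, plan is to argue directly via the well-understood toric structure of boundary strata over $0$-dimensional cusps, a modular-curve fibration structure over $1$-dimensional cusps, and an induction on the rank of cusps, to show that the Gysin map $H^{p-1,p-1}(\tilde{D},\Q) \to H^{p,p}(\bar{X},\Q)$ is injective whenever $p < n/2$.
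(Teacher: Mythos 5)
Your treatment of the pure weight-$(2p-1)$ piece $I$ is exactly the right linear-algebra argument, and it is in fact the entire content of the paper's proof once one key input is in place. The gap is in the $B$-piece: you correctly identify it as "the technical heart" and then do not prove it — you offer two alternative programs (Vogan--Zuckerman vanishing plus Franke--Harder control of Eisenstein cohomology, or an induction over boundary strata of a toroidal compactification), both left as plans with acknowledged hard steps. As written, the proposal therefore does not establish the lemma.

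The missing ingredient that collapses the whole difficulty is the purity of the mixed Hodge structure on $H^{k}(X,\Q)$ for $k<n-1$, which holds for these orthogonal locally symmetric spaces and is what the paper cites (from \cite{BLMM}). Since $p<n/2$ gives $2p-1<n-1$, the MHS on $H^{2p-1}(X,\Q)$ is pure of weight $2p-1$; in your notation this says $\mathrm{Gr}^{W}_{2p}H^{2p-1}(X,\Q)=0$, i.e.\ $B=0$ and $W_{2p}H^{2p-1}(X,\Q)=I$. The lemma then follows from your $I$-argument alone. So rather than a different-but-complete route, what you have is the correct elementary half of the proof together with an unfinished (and, given purity, unnecessary) automorphic program for the other half; note also that your intermediate claims there (e.g.\ which cohomological representations of ${\rm SO}(2,n)$ contribute in odd degree below $n$, and how Eisenstein classes pair with residues along toroidal strata) would themselves need careful justification. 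Replacing that entire discussion with the purity citation closes the gap.
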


\begin{proof}
The mixed Hodge structure on $H^{k}(X, {\Q})$ is pure when $k<n-1$ (see, e.g., \cite{BLMM}). 
Therefore, when $p<n/2$, 
the target of \eqref{eqn: non-cpt regulator} is 
the intersection of the rational part and the $F^{p}$-part of a pure ${\Q}$-Hodge structure of weight $2p-1$, 
which clearly vanishes. 
\end{proof}

Thus the map \eqref{eqn: non-cpt regulator} can be nonzero only when $p\geq n/2$. 
In this case, by the general formula, 
the cycle class of a special $(p, 1)$-cycle 
$\sum_{i}(Z_{L_i}, \psi(f_i))$ is given up to constant by 
\begin{equation}\label{eqn: non-cpt cycle class}
\sum_{i} \int_{Z_{L_i}} d \log \psi(f_i) \wedge \bullet \; \; 
\in W_{2p}H^{2p-1}(X, {\Q})\cap F^p. 
\end{equation}
Since $W_{2p-1}H^{2p-1}(X, {\Q})\cap F^p = 0$, 
the projection 
\begin{equation*}\label{eqn: MHS Gr}
W_{2p}H^{2p-1}(X, {\Q})\cap F^p \to {\rm Gr}^{W}_{2p}H^{2p-1}(X, {\Q})\cap F^p
\end{equation*}
is injective. 
So the class \eqref{eqn: non-cpt cycle class} is determined by its image 
in ${\rm Gr}^{W}_{2p}H^{2p-1}(X, {\Q})$, 
which is a Hodge class. 

On the other hand, when $p<n/2$, 
Lemma \ref{lem: vanishing discrete part} shows that 
the map \eqref{eqn: regulator} takes the form 
\begin{equation}\label{eqn: non-cpt regulator II}
{\CH}^{p}(X, 1)_{{\Q}} \to \frac{H^{2p-2}(X, {\C})}{H^{2p-2}(X, {\Q}(p))+F^{p}} 
\end{equation}
It seems that little is known for such a non-compact Abel-Jacobi map. 
In the case $p-1<n/4$, 
$H^{2p-2}(X)$ is generated by special cycles (\cite{BLMM}). 
Then \eqref{eqn: non-cpt regulator II} can be written as 
\begin{equation*}
{\CH}^{p}(X, 1)_{{\Q}} \to H^{2p-2}(X, {\Q}(p-1))\otimes_{{\Q}} {\C}/{\Q}(1). 
\end{equation*}
It would be interesting to express this non-compact Abel-Jacobi invariant of special $(p, 1)$-cycles  
in terms of special cycles and Borcherds products.

\begin{remark}
Even when $X$ is non-compact, \eqref{eqn: regulator (p,1)} still makes sense as a current, 
though its relation to the Beilinson regulator is not clear to me. 
In the case $p=2$, 
currents of the form \eqref{eqn: regulator regularized theta} for general $L$ 
have relevance to the theta lifting (\cite{Ga}). 
\end{remark}



\begin{thebibliography}{99}

\bibitem{Be}Beilinson, A.~A. 
\textit{Higher regulators and values of L-functions.}  
J. Soviet Math. \textbf{30} (1985) 2036--2070. 

\bibitem{Be2}Beilinson, A.~A. 
\textit{Notes on absolute Hodge cohomology.}  
in ``Applications of algebraic $K$-theory to algebraic geometry and number theory (Boulder, 1983)'', 
Contemp. Math., \textbf{55} (1986), 35--68. 


\bibitem{BLMM}Bergeron, N.; Li, Z.; Millson, J.; Moeglin, C. 
\textit{The Noether-Lefschetz conjecture and generalizations.} 
Invent. Math. \textbf{208} (2017), no. 2, 501--552. 


\bibitem{Bl}Bloch, S. 
\textit{Algebraic cycles and higher $K$-theory.}
Adv. in Math. \textbf{61} (1986), no.3, 267--304.


\bibitem{Bo95}Borcherds, R. 
\textit{Automorphic forms on $O_{s+2,2}({\R})$ and infinite products.} 
Invent. Math. \textbf{120} (1995), no. 1, 161--213. 

\bibitem{Bo98}Borcherds, R. 
\textit{Automorphic forms with singularities on Grassmannians.} 
Invent. Math. \textbf{132} (1998), no. 3, 491--562. 

\bibitem{BKPSB}
Borcherds, R.; Katzarkov, L.; Pantev, T.; Shepherd-Barron, N. I. 
\textit{Families of K3 surfaces.} 
J. Algebraic Geom. \textbf{7} (1998), no. 1, 183--193. 

\bibitem{Br}Bruinier, J.~H.
\textit{Borcherds products on $O(2, l)$ and Chern classes of Heegner divisors.} 
Lecture Notes in Math. \textbf{1780}, Springer-Verlag, 2002. 

\bibitem{BEF}Bruinier, J.~H.; Ehlen, S.; Freitag, E. 
\textit{Lattices with many Borcherds products.} 
Math. Comp. \textbf{85} (2016), 1953--1981. 

\bibitem{BR}Bruinier, J.~H.; Westerholt-Raum, M. 
\textit{Kudla's modularity conjecture and formal Fourier-Jacobi series.} 
Forum Math. Pi \textbf{3} (2015), e7, 30 pp.

\bibitem{Ga}Garcia, L.~E. 
\textit{Regularized theta lifts and $(1,1)$-currents on GSpin Shimura varieties.} 
Algebra Number Theory \textbf{10} (2016), no.3, 597--644.

\bibitem{GS}Gille, P.; Szamuely, T.  
\textit{Central simple algebras and Galois cohomology.} 
2nd edition. Cambridge Univ.~Press, 2017. 


\bibitem{Go}Goncharov, A. 
\textit{Regulators.} 
in ``Handbook of $K$-theory, I", 295--349, Springer, 2005. 

\bibitem{GN}Gritsenko, V.~A.; Nikulin, V.~V. 
\textit{Automorphic forms and Lorentzian Kac-Moody algebras. II.}  
Internat. J. Math. \textbf{9} (1998), no. 2, 201--275. 

\bibitem{HMP}Howard, B.; Madapusi Pera, K. 
\textit{Arithmetic of Borcherds products.} 
in ``Arithmetic divisors on orthogonal and unitary Shimura varieties.'' 
Ast\'erisque \textbf{421} (2020), 187--297.

\bibitem{Ka}Kato, K. 
\textit{Milnor $K$-theory and the Chow group of zero cycles.} 
in ``Applications of algebraic $K$-theory to algebraic geometry and number theory (Boulder, 1983)'', 
Contemp. Math., \textbf{55} (1986), 241--253. 

\bibitem{KLM}Kerr, M.; Lewis, J.; M\"uller-Stach, S. 
\textit{The Abel-Jacobi map for higher Chow groups.}
Compos. Math. \textbf{142} (2006), no.2, 374--396.


\bibitem{Ma}Ma, S. 
\textit{Quasi-pullback of Borcherds products.}  
Bull. Lond. Math. Soc. \textbf{51} (2019), no. 6, 1061--1078. 



\bibitem{Ni}Nikulin, V.V. 
\textit{Integral symmetric bilinear forms and some of their applications.}
Math. USSR Izv. \textbf{14} (1980), 103--167.  

\bibitem{Ra1}Ramakrishnan, D. 
\textit{Valeurs de fonctions L des surfaces d'Hilbert-Blumental en $s=1$.} 
C. R. Acad. Sci. Paris Math. \textbf{301} (1985), no.18, 809--812. 

\bibitem{Ra2}Ramakrishnan, D. 
\textit{Arithmetic of Hilbert-Blumenthal surfaces.} 
in ``Number theory (Montreal, 1985)'', 
CMS Conf. Proc. \textbf{7} (1987), 285--370. 


\bibitem{Ra3}Ramakrishnan, D. 
\textit{Modular curves, modular surfaces, and modular fourfolds.} 
in ``Algebraic cycles and motives. Vol. 1'', 
LMS Lecture Note Ser., \textbf{343} (2007), 278--292. 


\bibitem{SS}Saito, S.; Sato, K. 
\textit{Algebraic cycles and \'etale cohomology.} 2nd edition. (Japanese) 
Maruzen, 2025. 



\bibitem{Sr2}Sreekantan, R. 
\textit{Algebraic cycles and values of Green's functions.} 
arXiv:2022.08325

\bibitem{Su}Suslin, A.~A. 
\textit{Reciprocity laws and the stable rank of polynomial rings.} 
Math. USSR. Izv. \textbf{15} (1980), 589--623. 


\bibitem{Ze}Zemel, S. 
\textit{Seesaw identities and theta contractions with generalized theta functions, and restrictions of theta lifts.} 
Ramanujan J. \textbf{63} (2024), no.3, 749--771. 

\bibitem{Zhang}Zhang, W. 
\textit{Modularity of generating functions of special cycles on Shimura varieties.} 
Thesis, Columbia University, 2009. 
 

\end{thebibliography}
\end{document}